\title{The Knotting-Unknotting Game played on Sums of Rational Shadows}
\author{Will Johnson}
\newcommand{\outcome}{\operatorname{o}}
\newtheorem{theorem}{Theorem}[section] 
\newtheorem{lemma}[theorem]{Lemma} 
\newtheorem{corollary}[theorem]{Corollary}
\newtheorem{definition}[theorem]{Definition}
\newtheorem{remark}[theorem]{Remark}
\begin{document}
\maketitle

\section{Introduction}
A \emph{knot pseudodiagram} is a knot projection with some crossing data left unspecified, like Figure~\ref{example-pseudodiagram}.
\begin{figure}[htb]
\begin{center}
\includegraphics[width=3in]
					{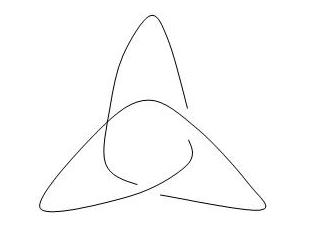}
\caption{A knot pseudodiagram.  One of the crossings is an \emph{unresolved crossing} (precrossing), while the other two are resolved.}
\label{example-pseudodiagram}
\end{center}
\end{figure}
In a pseudodiagram, crossings are allowed to be \emph{unresolved}, meaning that they do not indicate which strand is on top.
A pseudodiagram in which every crossing is unresolved is called a \emph{knot shadow}.  Knot shadows and pseudodiagrams were invented
by Ryo Hanaki~\cite{Hanaki}, motivated by the problem of mathematically modeling microscopic images of DNA with unclear crossing information.ling microscopic images of DNA with unclear crossing information.
See \cite{Hanaki} and \cite{Janos} for more information on pseudodiagrams.

The \emph{knotting-unknotting game}, also known as \emph{To Knot or Not to Knot}, is the game played on a knot
shadow or pseudodiagram as follows:
Two players, the \emph{Knotter} and \emph{Unknotter} (also known as \emph{King Lear} and \emph{Ursula}), take turns \emph{resolving} unresolved crossings, as
in Figure~\ref{fig:resolution},
until the knot is fully determined. Then the Unknotter wins if the resulting diagram is equivalent to the unknot, and the Knotter wins otherwise.
This game was introduced in \cite{KnotGames}.
\begin{figure}[htb]
\begin{center}
\includegraphics[width=3in]
					{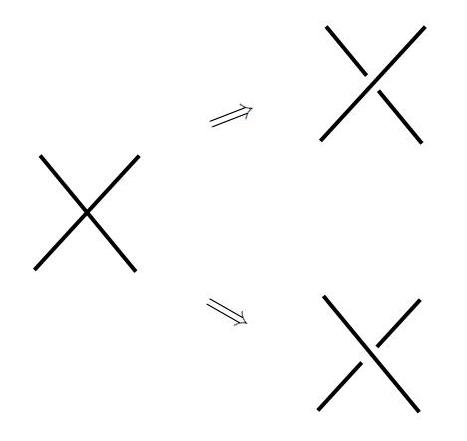}
\caption{The two ways to resolve an unresolved crossing.}
\label{fig:resolution}
\end{center}
\end{figure}

The practical difficulty with this game is determining which player has won once the game is over.  There is no simple rule to test whether a knot is
equivalent to the unknot.  However, if we restrict the game to \emph{rational shadows}, like the one in Figure~\ref{rational-shadow},
then the final knot will be a \emph{rational knot}.
In this case, a relatively simple rule due to John Conway~\cite{Conway1970} determines whether the knot is equivalent to the unknot.  This can be generalized slightly
to the case of \emph{sums of rational shadows}, like the one in Figure~\ref{rational-shadow-sum}.
\begin{figure}[htb]
\begin{center}
\includegraphics[width=3in]
					{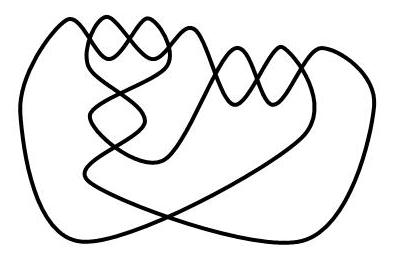}
\caption{A rational shadow, analogous to a rational knot in the sense of Conway.}
\label{rational-shadow}
\end{center}
\end{figure}
\begin{figure}[htb]
\begin{center}
\includegraphics[width=3in]
					{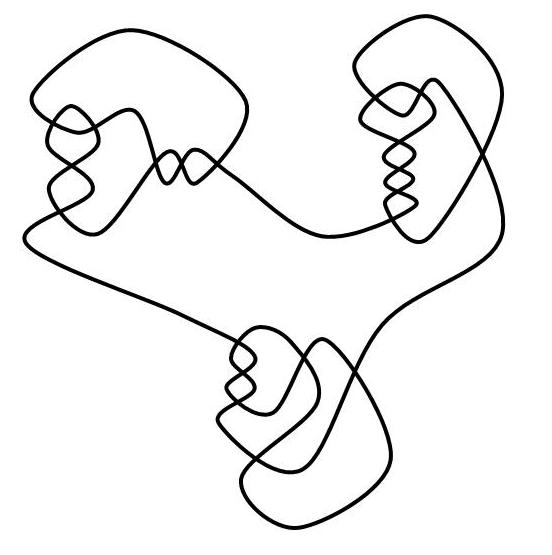}
\caption{A sum of rational shadows.}
\label{rational-shadow-sum}
\end{center}
\end{figure}

In this paper, we determine which player wins in the knotting-unknotting game, for all starting positions which are sums of rational shadows.
First, we define pseudo Reidemeister I and II moves (see Figures~\ref{phonyr1} and \ref{phonyr2}) in Section~\ref{sec:ops}, and consider their general
strategic effects in
the knotting-unknotting game in Section~\ref{sec:outcomes}.  We then
turn in Section~\ref{sec:rational} to the case of rational shadows, defining them and showing which operations correspond to pseudo Reidemeister moves.
In Sections~\ref{sec:oddeven} and \ref{sec:remainder} we show that the Unknotter has a guaranteed win for a certain small family of rational shadows, while in all
other cases, the winner is the second or first player, depending on the parity of the number of crossings.
This requires a computer verification of a finite list
of minimal cases; see appendix~\ref{sec:py} for the relevant python code.  In Section~\ref{sec:sum} we consider sums of rational shadows, and determine the winner
in all such positions.

\section{Operations on Pseudodiagrams}\label{sec:ops}
If $G$ and $H$ are two pseudodiagrams, we can define the \emph{(connected) sum} of $G$ and $H$, denoted $G\#H$, in a way completely analogous to the
usual definition for knots.  For example, the connected sum of the pseudodiagrams in Figure~\ref{trefoilstar-and-ambiguous} is shown
in Figure~\ref{trefoilstar-plus-ambiguous}.
\begin{figure}[htb]
\begin{center}
\includegraphics[width=3in]
					{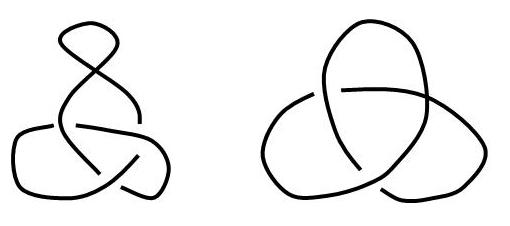}
\caption{Two pseudodiagrams}
\label{trefoilstar-and-ambiguous}
\end{center}
\end{figure}
\begin{figure}[htb]
\begin{center}
\includegraphics[width=2in]
					{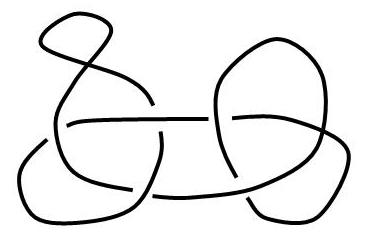}
\caption{The connected sum of the two pseudodiagrams in Figure~\ref{trefoilstar-and-ambiguous}}
\label{trefoilstar-plus-ambiguous}
\end{center}
\end{figure}
There are several ambiguities in this definition.  However, we will consider
pseudodiagrams to be equivalent if they can be related by the moves of Figure~\ref{modulo-moves}, which have no strategic effects.  Modulo these moves,
$G\#H$ is unambiguous.  This operation is associative and commutative.
\begin{figure}[htb]
\begin{center}
\includegraphics[width=5in]
					{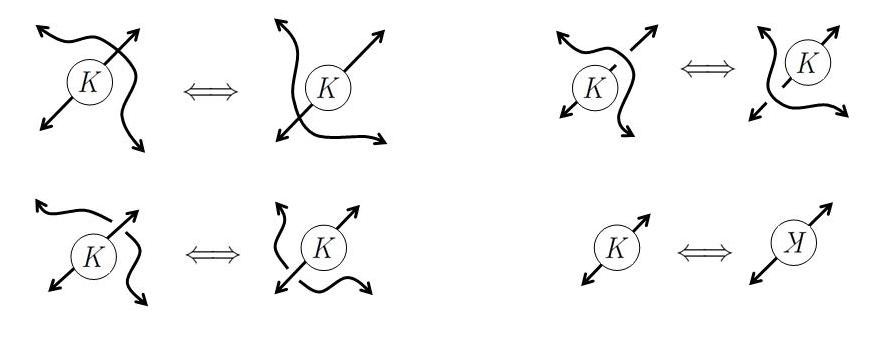}
\caption{These moves have no strategic effect on a pseudodiagram, from the point of view of the knotting-unknotting game.  Here $K$ stands
for an arbitrary pseudodiagram.  We consider two pseudodiagrams equivalent if they can be related by these moves or by (non-pseudo)
Reidemeister moves.}
\label{modulo-moves}
\end{center}
\end{figure}

By considering the genus of a knot, one can show that the connected sum of two knots is the unknot if and only if
the two knots are both the unknot.  For example this is done on pages 99-104 of Adams~\cite{KnotBook}.
From the point of view of the knotting-unknotting game, this means that when playing the sum of two positions,
the Unknotter needs to win on both summands separately to win the sum game.  The Knotter, on the other hand, only needs to win
on one of the two summands.  This makes the operation of adding knots inherently asymmetric, biased towards
the Knotter.

If $P$ and $Q$ are pseudodiagrams, we use $P \stackrel{1}{\to} Q$ to indicate that $Q$ is obtained from $P$ by deleting an unresolved loop, as in Figure~\ref{phonyr1}.
We call this a \emph{pseudo Reidemeister I move}.
\begin{figure}[htb]
\begin{center}
\includegraphics[width=3in]
					{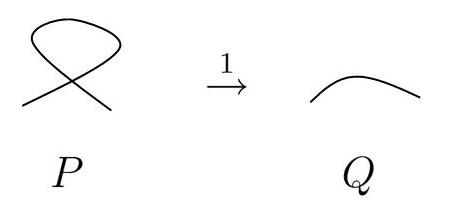}
\caption{Pseudo Reidemeister I move}
\label{phonyr1}
\end{center}
\end{figure}
Note that $P \stackrel{1}{\to} Q$ if and only if $P$ is $Q\#*$, where $*$ is the pseudodiagram of Figure~\ref{star-game}.  (The symbol
$*$ is motivated by analogy with the game $*$ in combinatorial game theory.)

\begin{figure}[htb]
\begin{center}
\includegraphics[width=1.3in]
					{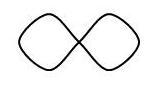}
\caption{The game $*$.  There is one available move, and the game is a guaranteed win for the Unknotter, regardless
of how play proceeds.}
\label{star-game}
\end{center}
\end{figure}

Similarly, we use $P \stackrel{2}{\to} Q$ to indicate that $Q$ is obtained from $P$ by a pseudo-Reidemeister II move, as in Figure~\ref{phonyr2}.
\begin{figure}[htb]
\begin{center}
\includegraphics[width=3in]
					{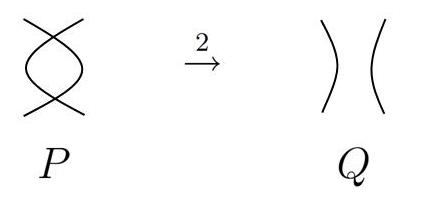}
\caption{Pseudo Reidemeister II move}
\label{phonyr2}
\end{center}
\end{figure}
We also use the notation $P \stackrel{i}{\Rightarrow} Q$ to indicate that $Q$ is obtained from $P$ by a sequence
of zero or more moves of type $i$, and $P \stackrel{*}{\Rightarrow} Q$ to allow for a mixture of both types of moves.
(So $\stackrel{*}{\Rightarrow}$ is the smallest reflexive and transitive relation containing both $\stackrel{1}{\to}$ and
$\stackrel{2}{\to}$.)  We also say that $P$ \emph{reduces to} $Q$ if $P \stackrel{*}{\Rightarrow} Q$.

We do not consider pseudodiagrams to be equivalent if they can be related by pseudo Reidemeister moves, because these operations
can change the outcome of the game.  The effect of pseudo Reidemeister moves on outcomes will be the focus of the next
section.

\begin{remark}\label{rem}
If $\Box$ is any of $\stackrel{1}{\to}$, $\stackrel{2}{\to}$, $\stackrel{1}{\Rightarrow}$,
$\stackrel{2}{\Rightarrow}$, $\stackrel{*}{\Rightarrow}$, then $G \Box H$ implies that $G\#K \Box H\#K$.
\end{remark}

\section{Outcomes}\label{sec:outcomes}

The knotting-unknotting game is a two-player finite game of perfect information with no ties or draws.  As such,
one of the two players has a winning strategy.  The identity of this player depends on which player goes first.
Consequently, we can group positions into four \emph{outcome classes}:
\begin{itemize}
\item Knotter wins under perfect play, no matter who goes first.
\item Unknotter wins under perfect play, no matter who goes first.
\item Whoever goes first wins under perfect play.
\item Whoever goes second wins under perfect play.
\end{itemize}
We refer to these four possibilities as K, U, 1, and 2, respectively.  We also say that a position is K1 if it is K or 1, U2 if it is U or 2, and so on.
Note that each position is either K1 or U2, and either K2 or U1.  Also, a position is in U1 iff Unknotter can win playing 1st,
K1 iff Knotter can win playing first, and so on.  The various possibilities are illustrated in Figure~\ref{outcomes-diagram}.

We can also think of U2 and K2 as the positions which the Unknotter or Knotter (respectively) can safely move to,
while K1 and U1 are the positions that the Knotter or Unknotter (respectively) would like to receive from his or her opponent.

\begin{figure}[htb]
\begin{center}
\includegraphics[width=3in]
					{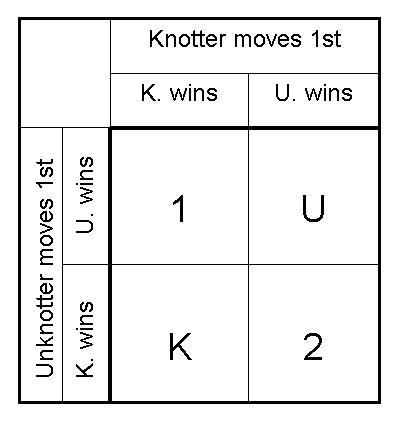}
\caption{The four possible outcome classes.}
\label{outcomes-diagram}
\end{center}
\end{figure}


\begin{definition}
If $P$ is a pseudodiagram, we say that $P$ is \emph{even} or \emph{odd} if the number of unresolved crossings
is even or odd, respectively.  We let the \emph{parity} of $P$, denoted $\pi(P)$, be $0$ or $1$,
if $P$ is even or odd, respectively.
\end{definition}
Note that if we play the knotting-unknotting game on $P$, the length of the game is the number of unresolved
crossings, so the parity of $P$ is the parity of the length of the game played on $P$.  Also note that if $P\stackrel{1}{\to} Q$,
then $P$ and $Q$ have opposite parities, while if $P \stackrel{2}{\to} Q$, then $P$ and $Q$ have the same parity.
Moreover, for any $P$ and $Q$, $\pi(P\#Q)$ has the same parity as $\pi(P) + \pi(Q)$.

\begin{definition}
If $P$ is a pseudodiagram, then an \emph{option} of $P$ is a pseudodiagram $Q$ obtained by resolving one crossing.
\end{definition}
Note that the options of $P$ all have the opposite parity to $P$.
If $P$ has no options, then every crossing in $P$ is resolved, so $P$ is in fact a true knot diagram.  We say that
$P$ is \emph{fully resolved} in this case.

The outcome classes
listed above can be given opaque recursive definitions as follows:
\begin{itemize}
\item If $P$ is a knot diagram of the unknot, then $P$ is U2 and U1.
\item If $P$ is a knot diagram that is not the unknot, then $P$ is K2 and K1.
\item Otherwise,
\begin{itemize}
	\item $P$ is U2 iff all of its options are U1.
	\item $P$ is K2 iff all of its options are K1.
	\item $P$ is U1 iff at least one of its options is U2.
	\item $P$ is K1 iff at least one of its options is K2.
\end{itemize}
\end{itemize}
Then we define the four outcome classes themselves as follows:
\begin{itemize}
\item $P$ has outcome U iff it is U1 and U2
\item $P$ has outcome K iff it is K1 and K2
\item $P$ has outcome 1 iff it is U1 and K1
\item $P$ has outcome 2 iff it is U2 and K2
\end{itemize}

\begin{definition}
Let $P$ be a pseudodiagram.  Then we say that $P$ is a \emph{zero game} iff $P$ is U1 and for every option
$Q$ of $P$, there is an option $R$ of $Q$, such that $Q$ is a zero game.
\end{definition}

Since this definition is recursive, we can make inductive proofs:
\begin{lemma}\label{zlem}
If $P$ is a zero game, then $P$ is even and U2.
\end{lemma}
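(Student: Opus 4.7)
The plan is to prove both claims simultaneously by induction on the number of unresolved crossings in $P$ (which is well-founded since the game is finite). The definition of ``zero game'' is recursive on the option structure, so every option $Q$ of $P$, and every option $R$ of $Q$, strictly decreases the number of unresolved crossings, giving us the induction step.

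For the base case, suppose $P$ has zero unresolved crossings, so $P$ is fully resolved. Then $P$ is even, trivially. The hypothesis that $P$ is a zero game requires in particular that $P$ be U1, and for a fully resolved $P$ this forces $P$ to be a diagram of the unknot; by the recursive definition of outcome classes, $P$ is then both U1 and U2, so $P$ is U2 as required. Note that the second clause of the zero-game condition is vacuously satisfied since $P$ has no options.

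For the inductive step, assume $P$ is a zero game with at least one unresolved crossing, and that the lemma holds for all pseudodiagrams with strictly fewer unresolved crossings. By the defining property, for every option $Q$ of $P$ there exists an option $R$ of $Q$ such that $R$ is a zero game. Since $R$ has fewer unresolved crossings than $P$, the induction hypothesis gives that $R$ is even and U2. Because going from $P$ to $Q$ to $R$ resolves two crossings, $R$ and $P$ have the same parity, so $P$ is even. For U2, we must show that every option $Q$ of $P$ is U1: indeed $Q$ has an option $R$ which is U2 (by the induction hypothesis applied to $R$), and by the recursive definition of outcome classes, this is exactly what it means for $Q$ to be U1. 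Since every option of $P$ is U1, $P$ is U2, completing the induction.

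The only point that requires a bit of care is interpreting the recursive definition correctly: the phrase ``such that $Q$ is a zero game'' should be read as referring to $R$, so that the induction actually descends, and one must track that two option-steps preserve parity in order to conclude that $P$ is even. Beyond that, the argument is a routine simultaneous induction and I do not anticipate any real obstacle.
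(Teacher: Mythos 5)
Your proof is correct and follows essentially the same inductive argument as the paper: induct on the number of unresolved crossings, use the U1 requirement to handle the fully resolved case, and use the option-of-an-option structure to get both the parity (two resolutions preserve parity) and the U2 property (every option $Q$ has a U2 option $R$, hence is U1). The only cosmetic difference is that the paper establishes evenness by contradiction rather than directly, and your reading of the typo in the definition ($R$, not $Q$, is the zero game) matches the paper's own usage.
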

\begin{proof}
We proceed by induction.  First suppose that $P$ is odd.  Then $P$ has an odd number of unresolved crossings, and therefore at least one option $Q$.
By definition of zero game,
$Q$ has an option $R$ which is a zero game.  By induction, $R$ is even, so $Q$ is odd, and $P$ is even, a contradiction.

Next we show that $P$ is U2.  If $P$ is fully resolved, then $P$ is the unknot or not.  But by definition of zero game, $P$ is
U1, so it is the unknot and therefore also U2.  Otherwise, if $P$ is not fully resolved, then the Unknotter can reply to any
move from $P$ to $Q$ by moving from $Q$ to $R$, where $R$ is a zero game.  This is possible by definition of a zero game, and a
winning move by induction, which ensures that $R$ is a safe position for the Unknotter to move to.
\end{proof}
Since zero games are already U1, it follows that every zero game has outcome U.
Intuitively, a position is a zero game if both of the following are true:
\begin{itemize}
\item It is even
\item The Unknotter can win playing second, even if the Knotter is allowed on one of his turns to pass rather than play.
\end{itemize}

The next theorem shows that zero games are \emph{strategically trivial} in some sense:
\begin{theorem}\label{zerogames}
Let $A$ and $P$ be pseudodiagrams, with $P$ a zero game.  Then $A\#P$ has the same outcome as $A$.
\end{theorem}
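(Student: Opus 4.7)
The plan is to proceed by induction on the number of unresolved crossings in $A\#P$. Since every position is exactly one of U1 or K2, and exactly one of U2 or K1, it suffices to prove the four forward implications: for each $X\in\{\text{U1},\text{U2},\text{K1},\text{K2}\}$, if $A$ is $X$ then $A\#P$ is $X$. The reverse implications follow by contraposition (e.g., if $A\#P$ is U1, then it is not K2, so by the K2-implication $A$ is not K2, hence $A$ is U1).

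For the base case, when $A\#P$ is fully resolved, so are $A$ and $P$. Since $P$ is a zero game, $P$ is U1 by definition, and a fully resolved U1 diagram must be the unknot. Hence $A\#P$ represents the same knot as $A$, and the outcomes agree.

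For the inductive step, the ``first player'' implications (U1 and K1) are straightforward. If $A$ is U1, it has an option $A'$ that is U2; then $A'\#P$ is an option of $A\#P$, and by the inductive hypothesis $A'\#P$ is U2, so $A\#P$ is U1. The K1 case is identical with K2 in place of U2. The ``second player'' implications (U2 and K2) are the substantive part. Suppose $A$ is U2; we must show every option of $A\#P$ is U1. An option of the form $A'\#P$, with $A'$ an option of $A$, is U1 by induction, since $A'$ is U1. For an option $A\#P'$ with $P'$ an option of $P$, we invoke the zero game property: by definition, $P'$ has an option $P''$ that is itself a zero game. Then $A\#P''$ is an option of $A\#P'$, and by the inductive hypothesis $A\#P''$ has the same outcome as $A$, hence is U2. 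Thus $A\#P'$ admits a U2 option and is U1. The K2 case runs identically, with the Knotter using the very same response ``move to a zero game'' to neutralize any play by the opponent in $P$.

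The main obstacle is conceptual rather than computational: one must see that the zero game condition was engineered precisely to make this induction go through. A secondary subtlety is that the argument uses the existence of the option $P''$ of $P'$, so we need $P'$ to have at least one option; this follows from Lemma~\ref{zlem}, which guarantees that the zero game $P$ is even, so any option $P'$ is odd and in particular not fully resolved. Once these pieces are in place, the four implications reduce to tracking parities and applying the inductive hypothesis, with no knot-theoretic computation required.
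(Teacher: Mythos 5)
Your overall strategy matches the paper's: induct on the number of unresolved crossings, and have the player with a winning strategy in $A$ answer any move in the $P$ summand by reverting $P$ to a zero game. Your reduction to the four forward implications via the complementary pairs U1/K2 and U2/K1 is a clean way to organize the bookkeeping, and your treatment of the second-player implications (including the check that an option $P'$ of $P$ is not fully resolved) is correct.

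There is, however, a genuine gap in the first-player implications. Your argument for ``$A$ is U1 $\Rightarrow$ $A\#P$ is U1'' begins ``$A$ has an option $A'$ that is U2,'' and likewise for K1. This fails when $A$ is fully resolved: a fully resolved unknot is U1 by fiat but has no options at all. Your base case covers only the situation where all of $A\#P$ is fully resolved, so the case where $A$ is fully resolved but $P$ still has unresolved crossings falls through the cracks. The unknotted sub-case is easy to patch ($A\#P \equiv P$, a zero game, hence outcome U by Lemma~\ref{zlem}). The knotted sub-case is not: to show that $A\#P$ is K1 when $A$ is a fully resolved nontrivial knot, your recursion does not close up (the options $A\#P'$ have options $A\#P''$ where $P''$ ranges over \emph{all} options of $P'$, not just the zero-game one, so the inductive hypothesis does not apply), and K1 cannot be deduced from K2 (outcome class 2 is K2 but not K1). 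What is needed is the topological input that a connected sum with a knotted summand is never the unknot --- the genus argument the paper cites from Adams --- so that every full resolution of $A\#P$ is knotted and the Knotter wins regardless of play. This is precisely why the paper's proof splits on whether $A$ (rather than $A\#P$) is fully resolved and disposes of that case separately. With that case added, your proof is complete and essentially the same as the paper's.
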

\begin{proof}
We proceed by joint induction on the number of unresolved crossings in $A$ and $P$.
First suppose that $A$ is fully resolved.  Then $A$ has outcome class U or K, depending on whether
$A$ is the unknot or not.  If $A$ is the unknot, then $A\#P$ is the same as $P$, so by Lemma~\ref{zlem}, $P$ has outcome U.
Otherwise, $A$ is knotted.  Consequently, no matter what $P$ becomes, $A\#P$ will also end up becoming knotted,
so the Knotter is already guaranteed a win.  Then $A\#P$ has outcome K.  Either way, $A\#P$ has the same outcome as $A$.

Now suppose that $A$ is not fully resolved.  If it is U1, then some option $A'$ of $A$ is U2.  By induction, $A'\#P$ is U2.
But $A'\#P$ is an option of $A\#P$, so $A\#P$ is also U1.  In other words, if $A'$ is a good move for the Unknotter in $A$, then
$A'\#P$ is a good move for the Unknotter in $A\#P$.

Conversely, suppose that $A\#P$ is U1.  Then the Unknotter has some good move, either of the form $A'\#P$ or $A\#P'$.  In the first
case, $A'\#P$ is U2, so by induction, $A'$ is U2.  Therefore, $A$ is U1.  In the other case, $A\#P'$ is U2,
and it has some option of the form $A\#P''$ with $P''$ a zero game, because $P'$ is an option of a zero game.  But since $A\#P'$
is U2, $A\#P''$ must be U1, so by induction, $A$ is U1.

So we have just seen that $A\#P$ is U1 iff $A$ is U1.  Similar arguments show that $A\#P$ is K1 iff $A$ is K1.
Since a pseudodiagram's outcome class is determined by whether it is U1 and whether it is K1, it follows that $A$ and $A\#P$ have the same outcome.
\end{proof}
Informally, we could summarize this proof as follows: $A\#P$ has the same outcome as $A$, because a player with a winning strategy in $A$ can simply use the
same strategy in $A\#P$, responding to any move in the summand $P$ with a reply that reverts it to a zero game, and ensuring that $P$
turns into the unknot once $A$ becomes fully resolved.

Using this we see that two pseudo Reidemeister I moves have no effect on strategy:
\begin{corollary}\label{r1}
For any pseudodiagram $P$, $P$ and $P\#*\#*$ have the same outcome.
\end{corollary}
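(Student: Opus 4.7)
The plan is to deduce this corollary from Theorem~\ref{zerogames} by exhibiting $*\#*$ as a zero game. Once that is established, taking $A = P$ in the theorem immediately yields that $P$ and $P\#*\#*$ have the same outcome, and we are done. So the whole task reduces to verifying the recursive definition of a zero game for the specific pseudodiagram $*\#*$.

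I plan to check this bottom up. First, the fully resolved unknot diagram is a zero game essentially for free: it is U1 by the base case of the recursive outcome definitions, and the universal quantifier "for every option $Q$" is vacuous because a fully resolved diagram has no options. Next, I will look at $*$ itself. Its single option is a one-crossing knot diagram, which via a genuine (non-pseudo) Reidemeister I move is equivalent to the unknot under the conventions set up around Figure~\ref{modulo-moves}. Hence the unique option of $*$ is the unknot, which is U1, so $*$ is U2.

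Finally I check $*\#*$. By the associativity/commutativity of $\#$ and the equivalence moves of Figure~\ref{modulo-moves}, each of its two options is identifiable with $*$, which we just saw is U2; so $*\#*$ is U1. For the recursive clause, for each option $Q = *$ of $*\#*$ I can take $R$ to be the unique option of $*$, namely the unknot, which we have already observed is a zero game. This verifies that $*\#*$ is itself a zero game, and applying Theorem~\ref{zerogames} finishes the argument.

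I do not expect a real obstacle here; the only thing to be a little careful about is correctly applying the equivalence moves of Figure~\ref{modulo-moves} (together with an honest Reidemeister I) when identifying the options of $*\#*$ with $*$ and of $*$ with the unknot. Intuitively, the content of the corollary is that the two unresolved crossings contributed by $*\#*$ cancel each other from the Unknotter's point of view: whenever the opponent plays in the $*\#*$ summand, the Unknotter replies in the same summand, and the summand contributes the unknot in the end regardless of how either player resolves it.
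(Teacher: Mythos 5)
Your proposal is correct and takes essentially the same route as the paper: the paper also derives the corollary from Theorem~\ref{zerogames} by observing that $*\#*$ is a zero game (it simply asserts this is easy to check, since $*\#*$ is even and a guaranteed Unknotter win however play proceeds, whereas you unwind the recursive definition explicitly). Your bottom-up verification of the zero-game condition is a fine, slightly more detailed rendering of the same argument.
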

\begin{proof}
This follows from Theorem~\ref{zerogames} by showing that $*\#*$ is a zero game.  This is easy to check, however, since $*\#*$ is a
guaranteed win for the Unknotter no matter how the players play, and $*\#*$ is even.
\end{proof}

The effect of a single pseudo Reidemeister I move is more vague:
\begin{lemma}\label{r1bland}
If $P$ is U2, then $P\#*$ is U1.  Similarly, if $P$ is K2, then $P\#*$ is K1.
\end{lemma}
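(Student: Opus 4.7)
The plan is to exhibit an explicit first move in $P \# *$ that collapses the $*$ summand, reducing the position (up to the equivalence of Section~\ref{sec:ops}) to $P$ with the opposing player to move. Since $*$ consists of a single unresolved loop crossing, both of its two resolutions produce the unknot via a Reidemeister I move. Hence resolving this crossing in $P \# *$ yields $P \# (\text{unknot})$, which by the modulo moves of Figure~\ref{modulo-moves} is equivalent to $P$.

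For the first claim, I would argue as follows. Assume $P$ is U2. Let the Unknotter move first in $P \# *$ by resolving the unique unresolved crossing of the $*$ summand. By the previous paragraph, the resulting position is equivalent to $P$, and it is now the Knotter's turn. Since $P$ is U2, the Unknotter wins from here. Hence the Unknotter wins $P \# *$ going first, so $P \# *$ is U1. The second claim follows by an entirely symmetric argument: if $P$ is K2, the Knotter moves first in $P \# *$ by resolving the $*$ crossing, leaving $P$ with the Unknotter to move; since $P$ is K2, the Knotter wins.

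I do not anticipate any real obstacle. The only point that deserves care is the step that identifies $P \# (\text{unknot})$ with $P$, but this is exactly one of the modulo moves built into the equivalence on pseudodiagrams declared in Section~\ref{sec:ops}, so no additional work is required. Unlike Corollary~\ref{r1}, which requires the zero-game machinery of Theorem~\ref{zerogames} because the extra parity makes a reduction-to-$P$ strategy impossible on its own, here the single move in $*$ suffices because it directly hands the position $P$ to the opponent with the right parity.
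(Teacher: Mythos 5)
Your proposal is correct and matches the paper's own argument: move first in the $*$ summand, note that its sole resolution yields the unknot so the position collapses to $P$, and then invoke the second-player winning strategy in $P$. No further comment is needed.
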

\begin{proof}
If the Unknotter can win in $P$ as the second player, then she can win in $P\#*$ as the first player
by moving in $*$ and then playing as the second player.  The sole option of $*$ is the unknot, so the Unknotter's opening move
results in a position equivalent to $P$.  The same trick works for the Knotter.
\end{proof}
Conversely, we also have
\begin{lemma}\label{r1bland-variant}
If $P\#*$ is U2, then $P$ is U1, and if $P\#*$ is K2, then $P$ is K1.
\end{lemma}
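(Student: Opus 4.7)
The plan is to mimic the strategy-stealing idea of Lemma~\ref{r1bland} but run it in reverse: instead of having the winner in $P$ move first in the $*$ summand of $P\#*$, we force the \emph{opponent} to do so by considering a specific line of play in $P\#*$.

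For the U2 case, I would argue as follows. Suppose $P\#*$ is U2, so the Unknotter has a winning strategy as the second player in $P\#*$. The game $*$ has a single available move, and its sole option is the unknot; so if the Knotter's opening move in $P\#*$ is to resolve the lone crossing in the $*$ summand, then the resulting position is $P\#U$ where $U$ is a one-crossing diagram of the unknot, which is equivalent to $P$ under the modulo moves of Figure~\ref{modulo-moves}. Against this Knotter opening, the Unknotter (by hypothesis) has a winning strategy from the resulting position $P$, and it is her turn to move. This is precisely what it means for the Unknotter to win $P$ as the first player, so $P$ is U1.

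The K2 case is entirely symmetric, with the roles of the Knotter and Unknotter swapped: if $P\#*$ is K2, the Knotter wins playing second, and we consider the line in which the Unknotter's opening move in $P\#*$ resolves the crossing in the $*$ summand. The resulting position is equivalent to $P$, and the Knotter (to move) has a winning strategy by hypothesis, showing $P$ is K1.

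I do not anticipate a serious obstacle: the only thing to check beyond the strategy-stealing is that resolving the crossing in the $*$ summand genuinely yields a position equivalent to $P$, and this is immediate from the convention established by the modulo moves in Figure~\ref{modulo-moves} (a resolved monogon may be removed without affecting play). The proof should thus fit in just a few lines.
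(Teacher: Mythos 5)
Your argument is correct, but it is not the route the paper takes: the paper's entire proof is the one-line observation that these two implications are the logical contrapositives of Lemma~\ref{r1bland}, using the fact that ``not U1'' is the same as ``K2'' and ``not U2'' is the same as ``K1.'' Your proof is instead a direct strategy argument: since $P\#*$ is not fully resolved, U2 means every option of $P\#*$ is U1; one such option is obtained by resolving the crossing in the $*$ summand, and either resolution yields a position equivalent to $P$ (the resolved kink is removed by an ordinary Reidemeister I move, which the paper's equivalence convention absorbs); hence $P$ is U1. This is the mirror image of the strategy-stealing in Lemma~\ref{r1bland} --- there the \emph{winner} spends a move in $*$, here the \emph{opponent} is imagined to do so. What the paper's approach buys is brevity and the reuse of an already-proved lemma; what yours buys is a self-contained, constructive argument that does not depend on Lemma~\ref{r1bland} at all and makes explicit which option of $P\#*$ witnesses the conclusion. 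Both are sound, and there is no gap in your proposal.
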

\begin{proof}
These statements are the logical contrapositives to Lemma~\ref{r1bland}.
\end{proof}

The situation with pseudo Reidemeister II moves is more complicated:
\begin{lemma}\label{r2}
Suppose $P \stackrel{2}{\to} Q$.  (In particular then, $P$ and $Q$ have the same
parity.)  If $P$ and $Q$ are even, then $Q$ is U2 implies $P$ is U2, and $Q$ is K2 implies $P$ is K2.
Similarly, if $P$ and $Q$ are odd, then $Q$ is U1 implies $P$ is U1, and $Q$ is K1 implies $P$ is K1.
\end{lemma}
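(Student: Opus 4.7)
The plan is to prove the even U2 implication by an explicit strategy for the Unknotter, to obtain the even K2 implication by the symmetric strategy for the Knotter, and to derive the two odd implications from the two even ones by resolving a single crossing.

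For the even U2 case, label the two crossings of $P$ removed by the pseudo Reidemeister II move as the \emph{R2 pair}, and the remaining crossings (which are in bijection with those of $Q$) as the \emph{$Q$-part}. Call a resolution of both R2 crossings \emph{matching} if the two resolved crossings can be cancelled by a true Reidemeister II move, so that $P$ with that resolution becomes equivalent to the corresponding state of the $Q$-part alone; any other resolution of both produces a persistent clasp. I describe a strategy for the Unknotter playing second in $P$: if Knotter resolves a $Q$-part crossing, Unknotter replies by her fixed U2-witness strategy in $Q$; if Knotter resolves one of the R2 pair, Unknotter resolves the other one to match.

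I then verify, by induction on the length of the game, the invariant that after each Unknotter reply, (i)~the R2 pair is either both unresolved or both resolved matching, and (ii)~the current $Q$-part position is U2 with Knotter to move. The invariant holds at the start and is preserved under both branches of the strategy: $Q$-part replies preserve (ii) by the definition of a U2 strategy in $Q$ and leave (i) untouched, while matching replies preserve (i) and do not alter the $Q$-part. When the game ends, every crossing is resolved, so (ii) forces the $Q$-part to resolve to the unknot, and (i) forces the R2 pair to be matching; a true R2 reduction then shows the whole final diagram is the unknot, so $P$ is U2. The K2 implication is proved identically with the players' roles swapped.

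For the odd U1 implication, since $Q$ is U1, some option $Q'$ of $Q$ is U2; resolving the corresponding $Q$-part crossing of $P$ yields an option $P'$ with $P' \stackrel{2}{\to} Q'$ and both diagrams even, so by the already-established even case $P'$ is U2, exhibiting $P$ as U1. The K1 implication is exactly analogous. The main subtlety to pin down is the matching-resolution claim, namely that a matched R2 pair genuinely simplifies via a true Reidemeister II move so that the terminal knot type of $P$ under this strategy really does agree with that of the underlying $Q$-part resolution, together with the bookkeeping showing that the Unknotter's matching response is only ever invoked when exactly one of the R2 crossings has been resolved, as guaranteed by invariant (i).
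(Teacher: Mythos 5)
Your proof is correct, and its core --- mirroring the winning strategy from $Q$ inside $P$ while answering any move in the clasp pair with the matching resolution that enables a true Reidemeister II cancellation --- is exactly the paper's argument (your parity invariant also supplies the same justification the paper gives for why the strategy's owner is never forced to open the clasp herself). The only organizational difference is in the odd cases: the paper runs the mirroring strategy once for ``the player who makes the last move'' (first player when odd, second when even), whereas you derive the odd implications from the even ones by passing to a U2 option $Q'$ of $Q$ and the corresponding option $P'$ of $P$; both routes work, and yours trades re-running the strategy argument for an appeal to the already-proved even case.
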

Another way to say this, is to say that $P$ is no worse than $Q$, for \emph{the player who will make the last move of the game}.
In the even case, this is the second player, while in the odd case this is the first player.
\begin{proof}
Let ``Alice'' be the player who will make the last move of the game, and suppose that Alice has a winning strategy in $Q$.
Then she can use her strategy in $Q$ to win in
$P$.  If at any point her opponent moves in one of the two new crossings, she moves in the other in a way that makes a Reidemeister II move possible,
as in Figure~\ref{r2-responses}.
\begin{figure}[htb]
\begin{center}
\includegraphics[width=3in]
					{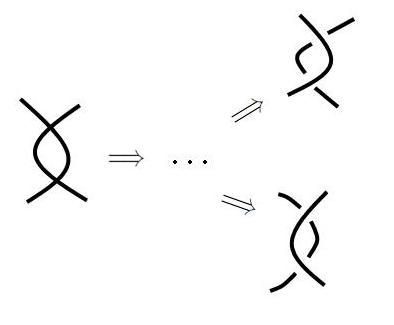}
\caption{After the first move, it is always possible to reply with a move to one of the configurations on the right.}
\label{r2-responses}
\end{center}
\end{figure}

Otherwise, she never plays in one of the two new crossings.  She is never forced to play in one of the two new crossings, because
this would only happen in the case where the two new crossings were the sole remaining places to move.  But if this were the case,
then on Alice's turn, there would be two more moves remaining in the game, so Alice's opponent would be the player who made the final move,
contradicting the choice of ``Alice.''
\end{proof}

We use a somewhat complicated method to describe the outcome of $P$ and $P\#*$ for a general pseudodiagram $P$.
\begin{definition}
If $P$ is a pseudodiagram, let $P^0$ and $P^1$ be the pseudodiagrams for which the unordered pairs $\{P^0,P^1\}$ and $\{P,P\#*\}$ are equal, $P^0$ is even, and $P^1$ is odd.
We call $P^0$
and $P^1$ the \emph{even} and \emph{odd projections} of $P$, respectively.
\end{definition}
Note that by Remark~\ref{rem}, if $P \stackrel{2}{\to} Q$, then $P^0 \stackrel{2}{\to} Q^0$
and $P^1 \stackrel{2}{\to} Q^1$.
\begin{definition}
If $P$ is a pseudodiagram, we let $\outcome(P) \in \{1,2,K,U\}$ denote the outcome class of $P$.
Then we call $(\outcome(P),\outcome(P\#*))$ the \emph{extended outcome} of $P$,
and $(\outcome(P^0),\outcome(P^1))$ the \emph{normalized outcome} of $P$.
\end{definition}
Note that if $P$ is even, then $(P^0,P^1) = (P,P\#*)$, so the extended and normalized outcome
are the same.  But if $P$ is odd, then $(P^0,P^1) = (P\#*,P)$, so the normalized outcome
is obtained from the extended outcome by swapping its components.  Moreover,
$P = P^{\pi(P)}$, so $\pi(P)$ and the normalized outcome of $P$ determine the outcome of $P$.

The point of normalized outcomes is the following:
\begin{lemma}\label{r1-normalized}
The normalized outcomes of $P$ and $P\#*$ are the same.  In particular, pseudo Reidemeister I
moves have no effect on normalized outcomes.
\end{lemma}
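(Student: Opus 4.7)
The plan is to unfold the definitions and reduce everything to Corollary~\ref{r1}, which already tells us that $P$ and $P\#*\#*$ share the same outcome. The key observation is that the set $\{P, P\#*\}$ and the set $\{P\#*, P\#*\#*\}$ (which arises from $Q = P\#*$) overlap in $P\#*$ and differ only by swapping $P$ for $P\#*\#*$, and these two have equal outcome by Corollary~\ref{r1}.

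More concretely, I would split into two cases according to the parity of $P$. In the even case, $P^{0}=P$ and $P^{1}=P\#*$; setting $Q=P\#*$, $Q$ is odd so $Q^{0}=Q\#*=P\#*\#*$ and $Q^{1}=Q=P\#*$. So the normalized outcome of $P$ is $(\outcome(P),\outcome(P\#*))$ and the normalized outcome of $Q$ is $(\outcome(P\#*\#*),\outcome(P\#*))$, and these agree by Corollary~\ref{r1}. In the odd case, $P^{0}=P\#*$ and $P^{1}=P$; now $Q=P\#*$ is even, so $Q^{0}=Q=P\#*$ and $Q^{1}=Q\#*=P\#*\#*$. So the normalized outcomes are $(\outcome(P\#*),\outcome(P))$ and $(\outcome(P\#*),\outcome(P\#*\#*))$, again equal by Corollary~\ref{r1}.

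I do not anticipate a real obstacle; the only thing to be careful about is keeping the parity bookkeeping straight, since the entire content of the lemma is the parity-sensitive definition of $P^{0}$ and $P^{1}$. Once the cases are laid out, Corollary~\ref{r1} closes each one. The final sentence of the lemma --- that pseudo Reidemeister I moves have no effect on normalized outcomes --- then follows immediately, since $P\stackrel{1}{\to}Q$ just means $P=Q\#*$, and applying the equality of normalized outcomes we have just established (once, with the roles of $P$ and $Q$ as needed) gives the statement.
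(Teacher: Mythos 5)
Your proposal is correct and follows essentially the same route as the paper: both arguments reduce to Corollary~\ref{r1} by a parity case analysis showing that $(P\#*)^{i}$ equals either $P^{i}$ or $P^{i}\#*\#*$ depending on the parity of $P$. The bookkeeping in your two cases matches the paper's, so there is nothing to add.
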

\begin{proof}
It is easy to see that $(P\#*)^0$ is either $P^0$ (if $P$ is odd), or $P^0\#*\#*$ (if $P$ is even).
So using Corollary~\ref{r1} if necessary, we see that $\outcome((P\#*)^0) = \outcome(P^0)$.  Similarly,
$(P\#*)^1$ is either $P^1$ (if $P$ is even), or $P^1\#*\#*$ (if $P$ is odd).
So using Corollary~\ref{r1} if necessary, we see that $\outcome((P\#*)^1) = \outcome(P^1)$.
\end{proof}

Lemmas~\ref{r1bland} and \ref{r1bland-variant} also impose some constraints on the possible normal outcomes.
In particular, for any $P$, we have $P^0 = P^1\#*$ or $P^1 = P^0\#*$, so by Lemma~\ref{r1bland} or Lemma~\ref{r1bland-variant},
the following possibilities are impossible:
\begin{itemize}
\item $P^0$ is U2 and $P^1$ is K2.
\item $P^0$ is K2 and $P^1$ is U2.
\end{itemize}
This makes the following definition legitimate
\begin{definition}\label{xydef}
Let $P$ be a pseudodiagram.  We then let $X(P)$ be determined as follows:
\begin{itemize}
\item $X(P) = 1$ iff $P^0$ is U2 and $P^1$ is U1.
\item $X(P) = 2$ iff $P^0$ is K1 and $P^1$ is U1.
\item $X(P) = 3$ iff $P^0$ is K1 and $P^1$ is K2.
\end{itemize}
Similarly, we define $Y(P)$ as follows:
\begin{itemize}
\item $Y(P) = 1$ iff $P^0$ is U1 and $P^1$ is U2.
\item $Y(P) = 2$ iff $P^0$ is U1 and $P^1$ is K1.
\item $Y(P) = 3$ iff $P^0$ is K2 and $P^1$ is K1.
\end{itemize}
\end{definition}
Note that higher values of $X(P)$ and $Y(P)$ are better for the Knotter
and lower values are better for the Unknotter.  Also note that the values
of $X(P)$ and $Y(P)$ together carry the exact same information as
the normalized outcome of $P$, since they determine for each $i \in \{0,1\}$
whether $P^i$ is in U2, and whether it is in K2, and these four possibilities
determine the outcomes of $P^i$, as in Figure~\ref{outcomes-diagram}.  The nine possibilities
are summarized in Figure~\ref{diamond}.  Note for instance that some possibilities,
like $(U,K)$, do not occur for normalized or extended outcomes.

The following rules are clear from the definition of $X(P)$ and $Y(P)$,
together with the fact that $P^{\pi(P)} = P$.
\begin{itemize}
\item If $P$ is even, then $P$ is U1 iff $Y(P) < 3$, while $P$ is K1 iff $X(P) > 1$.
\item If $P$ is odd, then $P$ is U1 iff $X(P) < 3$, while $P$ is K1 iff $Y(P) > 1$.
\end{itemize}

\begin{figure}[htb]
\begin{center}
\includegraphics[width=4in]
					{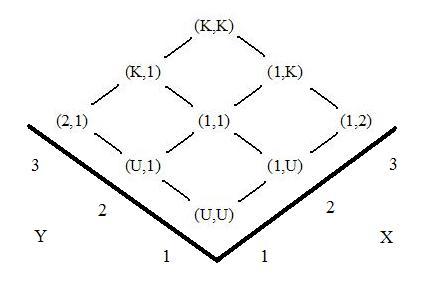}
\caption{Possible normalized outcomes, and their correspondence with $X$ and $Y$ values.  For example,
$X = 2$ and $Y = 3$ corresponds to a normalized outcome of $(K,1)$.}
\label{diamond}
\end{center}
\end{figure}

This twisted way of describing the outcome of $P$ and $P\#*$ is motivated by the following
theorem:\footnote{Additionally, when considering sums of games, $X$ and $Y$ become relevant.  In particular,
$X(P\#Q)$ is completely determined by $X(P)$ and $X(Q)$, and $Y(P\#Q)$ is partially determined
by $Y(P)$ and $Y(Q)$, independently of the parities of $P$ and $Q$.  We do not discuss these facts in what follows,
though they should not be difficult for the interested reader to find.}
\begin{theorem}\label{xyinequalities}
If $P \stackrel{1}{\to} Q$, then
\[ X(P) = X(Q)\]
\[ Y(P) = Y(Q)\]
\[ \pi(P) = 1 - \pi(Q)\]
If $P \stackrel{2}{\to} Q$, then
\[ X(P) \le X(Q)\]
\[ Y(P) \ge Y(Q)\]
\[ \pi(P) = \pi(Q)\]
Consequently, if $P \stackrel{*}{\Rightarrow} Q$, then
\[ X(P) \le X(Q)\]
\[ Y(P) \ge Y(Q)\]
\end{theorem}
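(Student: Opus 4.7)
The plan is to handle the two kinds of moves separately and then obtain the $\stackrel{*}{\Rightarrow}$ statement by induction on the length of the reduction sequence.

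For the R1 case, since $P \stackrel{1}{\to} Q$ is literally the statement $P = Q\#*$, Lemma~\ref{r1-normalized} immediately gives that $P$ and $Q$ have the same normalized outcome. As noted after Definition~\ref{xydef}, the values $X(P)$ and $Y(P)$ are determined by the normalized outcome, so $X(P) = X(Q)$ and $Y(P) = Y(Q)$ follow with no further work. The parity statement is just the observation that R1 deletes a single unresolved crossing, flipping parity.

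The R2 case is where the real work happens. I would first apply Remark~\ref{rem} to promote $P \stackrel{2}{\to} Q$ to $P^0 \stackrel{2}{\to} Q^0$ and $P^1 \stackrel{2}{\to} Q^1$ (this uses that each $P^i$ is either $P$ or $P\#*$). Since $P^0, Q^0$ are both even and $P^1, Q^1$ are both odd, Lemma~\ref{r2} supplies four implications: $Q^0$ U2 $\Rightarrow$ $P^0$ U2, $Q^0$ K2 $\Rightarrow$ $P^0$ K2, $Q^1$ U1 $\Rightarrow$ $P^1$ U1, and $Q^1$ K1 $\Rightarrow$ $P^1$ K1. To extract $X(P) \le X(Q)$, I would read off from Definition~\ref{xydef} the characterizations that $X(R) \le 2$ holds iff $R^1$ is U1, and $X(R) \le 1$ holds iff additionally $R^0$ is U2; then each case $X(Q) \in \{1,2,3\}$ drops out directly from the relevant implications above. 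The analogous characterizations $Y(R) \ge 2 \iff R^1$ is K1 and $Y(R) \ge 3 \iff $ additionally $R^0$ is K2 handle $Y(P) \ge Y(Q)$ in exactly the same way. Parity is preserved because R2 removes two unresolved crossings simultaneously.

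Finally, the $\stackrel{*}{\Rightarrow}$ statement follows by a routine induction on the length of a reduction sequence, using that $\stackrel{*}{\Rightarrow}$ is the reflexive-transitive closure of $\stackrel{1}{\to}\cup\stackrel{2}{\to}$: R1 steps preserve $X$ and $Y$, and R2 steps can only weakly decrease $X$ and weakly increase $Y$, so both inequalities persist along any chain. The main obstacle, such as it is, is just organizing the fourfold case analysis in the R2 case cleanly, but this is entirely mechanical once the characterizations of $\{X \le k\}$ and $\{Y \ge k\}$ in terms of the outcome classes of the individual projections $P^0$ and $P^1$ are written down.
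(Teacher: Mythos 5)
Your proposal is correct and follows essentially the same route as the paper: Lemma~\ref{r1-normalized} for the R1 case, then Remark~\ref{rem} plus Lemma~\ref{r2} applied to the projections $P^0, P^1$ for the R2 case, with the four resulting implications translated via Definition~\ref{xydef} into the inequalities on $X$ and $Y$, and transitivity for $\stackrel{*}{\Rightarrow}$. Your reformulation of the case analysis in terms of the characterizations $X(R)\le 2 \iff R^1$ is U1 and $X(R)\le 1 \iff R^0$ is U2 is just a repackaging of the paper's four implications, not a different argument.
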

\begin{proof}
If $P \stackrel{1}{\to} Q$, then $P = Q\#*$.  By Lemma~\ref{r1-normalized}, $P$ and $Q$ have the same normalized outcomes, so they
have the same $X$ and $Y$ values.  On the other hand, they have opposite parities.

If $P \stackrel{2}{\to} Q$, then by Remark~\ref{rem}, $P^0 \stackrel{2}{\to} Q^0$ and $P^1 \stackrel{2}{\to} Q^1$.
Now $P^0$ and $Q^0$ are even, while $P^1$ and $Q^1$ are odd.  So by Lemma~\ref{r2},
\begin{itemize}
\item If $Q^0$ is U2, then $P^0$ is U2.
\item If $Q^0$ is K2, then $P^0$ is K2.
\item If $Q^1$ is U1, then $P^1$ is U1.
\item If $Q^1$ is K1, then $P^1$ is K1.
\end{itemize}
By Definition~\ref{xydef}, these amount to the following implications:
\[ X(Q) \le 1 \implies X(P) \le 1\]
\[ X(Q) < 3 \implies X(P) < 3\]
\[ Y(Q) \ge 3 \implies Y(P) \ge 3\]
\[ Y(Q) > 1 \implies Y(P) > 1.\]
But since $X$ and $Y$ values are in the set $\{1,2,3\}$, it follows easily that $X(P) \le X(Q)$ and
$Y(P) \ge Y(Q)$.

The inequalities for the $P \stackrel{*}{\Rightarrow} Q$ case follow by transitivity.
\end{proof}

\begin{corollary}\label{redux}
Let $P$ be a pseudodiagram that reduces to the unknot by pseudo Reidemeister I and II moves.  (That is,
$P \stackrel{*}{\Rightarrow} I$, where $I$ is the unknot.)  If $P$ is even, then its outcome class is either U or 2,
while if $P$ is odd, its outcome class is either U or 1.  In particular, no pseudodiagram that reduces to the unknot
is in class $K$.
\end{corollary}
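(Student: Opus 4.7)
The plan is to apply Theorem~\ref{xyinequalities} directly to the reduction $P \stackrel{*}{\Rightarrow} I$, after first computing the $X$ and $Y$ values of the unknot itself.

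First I would compute $X(I)$ and $Y(I)$. Since $I$ is fully resolved and equal to the unknot, it is both U1 and U2 by the base case of the recursive definitions. The unknot is even ($0$ unresolved crossings), so $I^0 = I$, while $I^1 = I\#*$. The pseudodiagram $I\#*$ is equivalent (modulo the moves of Figure~\ref{modulo-moves}) to $*$ itself, which is a guaranteed Unknotter win regardless of play, so $I^1$ is also both U1 and U2. Plugging into Definition~\ref{xydef}, $I^0$ being U2 and $I^1$ being U1 give $X(I) = 1$, and $I^0$ being U1 together with $I^1$ being U2 give $Y(I) = 1$.

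Next I would apply the last clause of Theorem~\ref{xyinequalities} to the hypothesis $P \stackrel{*}{\Rightarrow} I$, obtaining $X(P) \le X(I) = 1$. Since $X$ values lie in $\{1,2,3\}$, this forces $X(P) = 1$, so by Definition~\ref{xydef} we get that $P^0$ is U2 and $P^1$ is U1. (The inequality $Y(P) \ge Y(I) = 1$ is automatic and yields no further information, which is fine.)

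Finally I would split on the parity of $P$ using the identity $P = P^{\pi(P)}$. If $P$ is even, then $P = P^0$ is U2, which rules out outcome classes K and 1 (both of which are K2), leaving $P$ in U or 2. If $P$ is odd, then $P = P^1$ is U1, which rules out classes K and 2 (both of which are K1), leaving $P$ in U or 1. In either case $P$ is not in class K, which gives the final ``In particular'' statement. The proof is essentially routine once the $X(I) = Y(I) = 1$ calculation is done; the only mild subtlety is verifying the equivalence $I\#* \equiv *$ and recalling that the single move available in $*$ yields the unknot, so that $*$ is U under either player's initiative.
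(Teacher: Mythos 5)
Your proof is correct and follows essentially the same route as the paper: compute that the unknot has $X = Y = 1$, apply Theorem~\ref{xyinequalities} to force $X(P) = 1$, and then read off the outcome from $P = P^{\pi(P)}$. The only difference is that you spell out the verification that the normalized outcome of the unknot is $(U,U)$ (via $I\#* \equiv *$), which the paper asserts without comment.
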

\begin{proof}
The normalized outcome of the unknot is $(U,U)$, which corresponds to $X$ and $Y$ values of 1.  So by Theorem~\ref{xyinequalities},
$X(P) \le 1$ and $Y(P) \ge 1$.  This tells us nothing about $Y(P)$, but it tells us that $X(P) = 1$.  In particular,
$P^0$ is U2 and $P^1$ is U1.  But if $P$ is even, then $P^0 = P$, so $P$ is U or 2.  On the other hand, if $P$ is odd, then $P^1 = P$, so $P$
is U or 1.
\end{proof}
The simplest shadow that does not reduce to the unknot is the one shown in Figure~\ref{four-petal}.
\begin{figure}[htb]
\begin{center}
\includegraphics[width=1.5in]
					{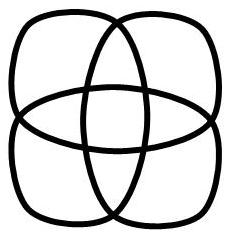}
\caption{The simplest knot shadow which does not reduce by pseudo Reidemeister I and II moves to the unknot.}
\label{four-petal}
\end{center}
\end{figure}
It can be verified with a computer computation that it indeed has outcome K.  (In fact the Knotter can guarantee that the final knot's
``knot determinant'' differs from the unknot.  The ``knot determinant'' is the magnitude of the
Alexander polynomial evaluated at $-1$.)

For a simpler example, the \emph{pseudodiagram} on the left side of Figure~\ref{trefoilstar-and-ambiguous} does not reduce to the unknot, and indeed it is in class K (trivially).

We also have
\begin{corollary}\label{21preserve}
If $P \stackrel{*}{\Rightarrow} Q$, and $Q$ has normalized outcome $(2,1)$, then so does $P$.
\end{corollary}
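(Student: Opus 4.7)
The plan is to translate the hypothesis about normalized outcomes into statements about the invariants $X$ and $Y$, and then invoke the monotonicity in Theorem~\ref{xyinequalities} directly. The key observation is that the normalized outcome determines, and is determined by, the pair $(X,Y)$, via the nine-cell table of Figure~\ref{diamond}.

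First I would unpack what normalized outcome $(2,1)$ means for $Q$. Outcome $2$ for $Q^0$ means $Q^0$ is simultaneously in U2 and K2; outcome $1$ for $Q^1$ means $Q^1$ is simultaneously in U1 and K1. Running these through Definition~\ref{xydef}: since $Q^0$ is U2 we get $X(Q) = 1$, and since $Q^0$ is K2 we get $Y(Q) = 3$. (Both conditions on $Q^1$ are then automatically consistent with these $X$ and $Y$ values.) So the normalized outcome $(2,1)$ is precisely the extreme corner $X = 1$, $Y = 3$ of the diamond in Figure~\ref{diamond}.

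Now I would apply Theorem~\ref{xyinequalities} to $P \stackrel{*}{\Rightarrow} Q$: this gives $X(P) \le X(Q) = 1$ and $Y(P) \ge Y(Q) = 3$. Since $X$ and $Y$ take values in $\{1,2,3\}$, this forces $X(P) = 1$ and $Y(P) = 3$. Reversing the translation from the first paragraph, $X(P) = 1$ gives that $P^0$ is U2, while $Y(P) = 3$ gives that $P^0$ is K2 and $P^1$ is K1; combining with $X(P) = 1$ (which also gives $P^1$ is U1), we conclude $P^0$ has outcome $2$ and $P^1$ has outcome $1$, i.e.\ $P$ has normalized outcome $(2,1)$.

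There is no real obstacle here: the whole point of introducing $X$ and $Y$ and Theorem~\ref{xyinequalities} was to reduce claims of this form to a comparison of two integers in $\{1,2,3\}$. The only thing to be careful about is correctly identifying which corner of the diamond corresponds to $(2,1)$, since the normalized outcome swaps components relative to the extended outcome when $P$ is odd; but the definitions of $X$ and $Y$ are already phrased in terms of $P^0$ and $P^1$ rather than $P$ and $P\#*$, so parity plays no role in the argument.
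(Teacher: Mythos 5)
Your proposal is correct and follows essentially the same route as the paper: translate normalized outcome $(2,1)$ into $X(Q)=1$, $Y(Q)=3$, apply the monotonicity of Theorem~\ref{xyinequalities} to pin down $X(P)=1$ and $Y(P)=3$, and translate back. The only difference is that you spell out the dictionary between the $(X,Y)$ values and the outcome classes in more detail than the paper does.
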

\begin{proof}
The normalized outcome of $Q$ is $(2,1)$ if and only if $X(Q) = 1$ and $Y(Q) = 3$, and similarly for $P$.
By Theorem~\ref{xyinequalities}, $X(P) \le X(Q)$ and $Y(P) \ge Y(Q)$.  So also $X(P) = 1$ and $Y(P) = 3$,
so $P$ has normalized outcome $(2,1)$.
\end{proof}

\section{Rational pseudodiagrams and shadows}\label{sec:rational}
The main class of pseudodiagrams that we consider are ones analogous to the rational knots in the sense of
Conway~\cite{Conway1970}.  These are formed recursively as follows:

First of all, we use $[~]$ to denote the rational tangle of Figure~\ref{base-tangle}.
\begin{figure}[htb]
\begin{center}
\includegraphics[width=1in]
					{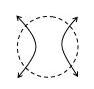}
\caption{The starting tangle}
\label{base-tangle}
\end{center}
\end{figure}
Then we recursively define $[a_1,\ldots,a_n]$ to be
the rational tangle obtained from $[a_1,\ldots,a_{n-1}]$ by reflecting over a 45 degree axis and adding $a_n$ twists
to the right.  A negative number indicates twists in the negative direction.  This notation is a variant of Conway's~\cite{Conway1970} notation for rational knots.
We also generalize this notation, letting $[a_1(b_1),\ldots,a_{n}(b_n)]$ denote a tangle pseudodiagram
in which there are $a_1$ legitimate crossings and $b_1$ unresolved crossings at each step.\footnote{The order of the resolved and unresolved crossings
is left unspecified because it makes no strategic difference.}  See Figure~\ref{tangle-examples}
for examples.

\begin{figure}[htbp]
\begin{center}
\includegraphics[width=6in]
					{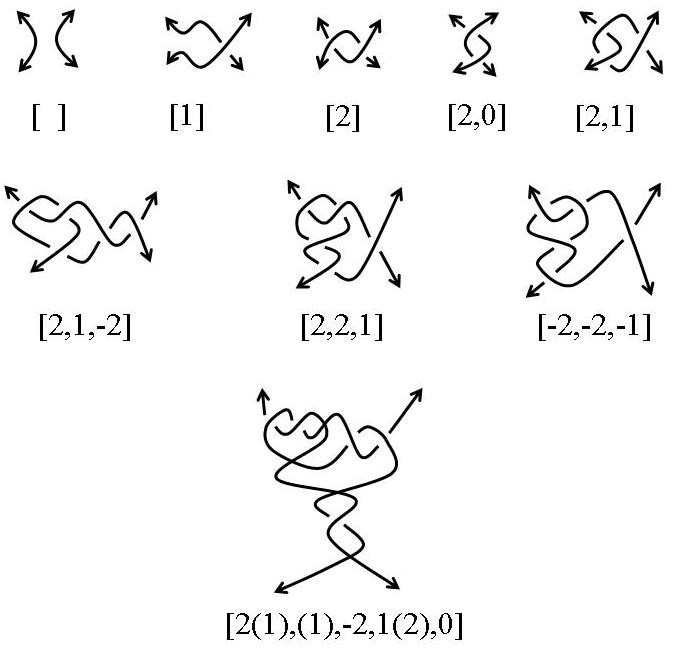}
\caption{Examples of rational tangle pseudodiagrams.}
\label{tangle-examples}
\end{center}
\end{figure}

In particular the $a_i \in \mathbb{Z}$ and the $b_i \in \mathbb{N}$, where $\mathbb{N}$ is the set of nonnegative integers.  If $a_i = 0$,
we write $(b_i)$ instead of $a_i(b_i)$, and similarly if $b_i = 0$, we write $a_i$ instead of $a_i(b_i)$.
A rational tangle shadow is one of the form $[(b_1),\ldots,(b_n)]$, in which no crossings are resolved.

We abuse notation, and use the same $[a_1(b_1),\ldots,a_n(b_n)]$ notation for the pseudodiagram obtained by
connecting the top two strands of the tangle and the bottom two strands, as in Figure~\ref{closing-a-tangle}.
\begin{figure}[htb]
\begin{center}
\includegraphics[width=3in]
					{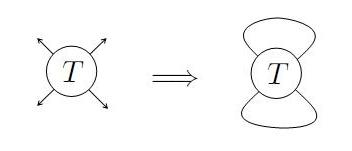}
\caption{A tangle is turned into a knot by connecting the top two strands, and connecting the bottom two strands.}
\label{closing-a-tangle}
\end{center}
\end{figure}
Note that this can sometimes yield a two-component link, rather than a knot, as in Figure~\ref{degenerate-shadow}.
\begin{figure}[htb]
\begin{center}
\includegraphics[width=2in]
					{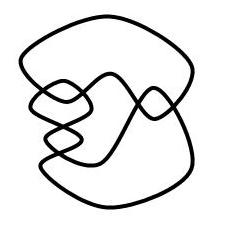}
\caption{Sometimes a tangle's closure is a two-component link, rather than a knot shadow.}
\label{degenerate-shadow}
\end{center}
\end{figure}

We list some fundamental facts about rational tangles, due to Conway~\cite{Conway1970}:
\begin{theorem}\label{fundrat}
If $[a_1,\ldots,a_m]$ and $[b_1,\ldots,b_n]$ are rational tangles, then they are equivalent if and only if
\[ a_m + \frac{1}{a_{m-1} + \frac{1}{\ddots + \frac{1}{a_1}}} = b_n + \frac{1}{b_{n-1} + \frac{1}{\ddots + \frac{1}{b_1}}}.\]
The link $[a_1,\ldots,a_m]$ is has a single component (i.e., is a knot) if and only if
\[ a_m + \frac{1}{a_{m-1} + \frac{1}{\ddots + \frac{1}{a_1}}} = \frac{p}{q},\]
where $p,q \in \mathbb{Z}$ and $p$ is odd.
Finally, $[a_1,\ldots,a_m]$ is the unknot if and only if $q/p$ is an integer.
\end{theorem}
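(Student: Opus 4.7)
The plan is to establish the continued fraction $F(T) := a_m + 1/(a_{m-1} + 1/(\cdots + 1/a_1))$ as a complete invariant of rational tangles, following Conway. I would define $F$ recursively by $F([~]) = 0$ together with the two transformation rules that appending $n$ twists on the right sends $F$ to $F + n$, and that the $45^\circ$ reflection sends $F$ to $1/F$; iterating these from the base tangle reproduces the continued fraction in the statement.

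The central step is that $F$ is invariant under ambient isotopy of tangles rel the four endpoints. One clean route is via the Kauffman bracket: the bracket of a rational tangle $T$, reduced using the skein relations, is a $\mathbb{Z}[A,A^{-1}]$-linear combination $\alpha \cdot [0] + \beta \cdot [\infty]$ of the two elementary tangles, and a standard calculation recovers $F(T)$ from $\alpha$ and $\beta$ after a suitable evaluation; invariance of the bracket under Reidemeister II and III then yields invariance of $F$. Alternatively, Conway's original approach verifies invariance directly by combinatorial case analysis on the Reidemeister moves applied to rational tangles. For the converse direction of part (1), every rational $p/q$ admits a finite continued fraction expansion via the Euclidean algorithm, producing a realization as some $[a_1,\ldots,a_m]$; any two such expansions of the same rational are related by the standard identities on continued fractions, each of which is realizable by an isotopy of the tangle.

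For the single-component criterion in part (2), I would track the \emph{connection pattern} of the four endpoints, namely the partition into two pairs determined by which endpoints are joined by a strand of the tangle. A short check on the two generating operations shows the pattern depends only on the parities of the numerator and denominator of $F(T) = p/q$ in lowest terms, and in particular the ``horizontal'' pattern --- in which the two top endpoints are paired and the two bottom endpoints are paired --- occurs precisely when $p$ is even. Taking the closure as in Figure~\ref{closing-a-tangle} yields a two-component link precisely when the pattern is horizontal, so the closure is a knot iff $p$ is odd. For the unknot criterion in part (3), $q/p \in \mathbb{Z}$ is equivalent to $F(T)$ being an integer, which by part (1) is equivalent to $T$ being isotopic to $[n]$ for some $n \in \mathbb{Z}$; the closure of $[n]$ is visibly unknotted, being a single circle with $n$ trivial right twists.

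The main obstacle is proving invariance of $F$, which carries essentially all the content of the theorem. The Kauffman bracket route is conceptually clean but imports nontrivial machinery; Conway's argument is more elementary but requires careful case analysis on the Reidemeister moves. Once invariance is in hand, the Euclidean algorithm argument for the converse, the parity tracking for the component count, and the triviality of the closure of $[n]$ are all routine.
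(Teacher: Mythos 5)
The paper offers no proof of Theorem~\ref{fundrat}: it is quoted from Conway~\cite{Conway1970} and used as a black box (and re-implemented numerically in the \texttt{unknot} routine of Appendix~\ref{sec:py}), so there is no in-paper argument to compare yours against. Your outline of part (1) (invariance of the continued fraction via the bracket or via Conway's case analysis, plus realization and uniqueness through the Euclidean algorithm) and of part (2) (tracking the endpoint connection pattern through the two generating operations and reading off the component count of the closure from the parity of $p$) is the standard route and is fine as a sketch.

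Part (3), however, contains a genuine error. You assert that ``$q/p \in \mathbb{Z}$ is equivalent to $F(T)$ being an integer.'' With $F(T) = p/q$ in lowest terms, $q/p \in \mathbb{Z}$ forces $p = \pm 1$, so that $F(T) = \pm 1/q$ is the \emph{reciprocal} of an integer; by contrast $F(T) \in \mathbb{Z}$ forces $q = \pm 1$. The two conditions coincide only when $F(T) = \pm 1$. Your version would declare the closure of $[3]$ unknotted (its fraction $3$ is an integer, yet $q/p = 1/3 \notin \mathbb{Z}$), but that closure is a trefoil; note that the paper's code returns ``unknot'' exactly when the \emph{numerator} of the continued fraction is $\pm 1$, consistent with the statement as written. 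Correspondingly, the tangles whose closure (top joined to top, bottom to bottom, as in Figure~\ref{closing-a-tangle}) untwists to the unknot are those of fraction $\pm 1/q$, e.g.\ $[q,0]$, not the twist tangles $[n]$ of integer fraction, whose closures are the $(2,n)$ torus links. The repair is routine --- replace ``$F(T)$ is an integer'' by ``$1/F(T)$ is an integer'' and reduce to the tangle $[q,0]$ --- but as written your argument establishes a different, and false, criterion.
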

Note that $[a_1(b_1),\ldots,a_n(b_n)]$ is a knot pseudodiagram (as opposed to a link pseudodiagram) if and only if
$[a_1 + b_1, \ldots, a_n + b_n]$ is a knot (as opposed to a link), since the number of components in the diagram
does not depend on how crossings are resolved.

\begin{lemma}
The following pairs of rational shadows are topologically equivalent (i.e., equivalent up to planar isotopy):
\begin{equation} \left[(1),(a_1),\ldots,(a_n)\right] = \left[(a_1 + 1),(a_2)\ldots,(a_n)\right] \label{onecombl}\end{equation}
\begin{equation} \left[(a_1),\ldots,(a_n),(1)\right] = \left[(a_1),\ldots,(a_{n-1}),(a_n + 1)\right] \label{onecombr}\end{equation}
\begin{equation} \left[(0),(0),(a_1),\ldots,(a_n)\right] = \left[(a_1),\ldots,(a_n)\right] \label{zlossl}\end{equation}
\begin{equation} \left[(a_1),\ldots,(a_i),(0),(a_{i+1}),\ldots,(a_n)\right] =
\left[(a_1),\ldots,(a_i + a_{i+1}),\ldots,(a_n)\right]\label{zloss}\end{equation}
\begin{equation} \left[(a_1),\ldots,(a_n),(0),(0)\right] = \left[(a_1),\ldots,(a_n)\right]\label{zlossr}\end{equation}
\begin{equation} \left[(a_1),(a_2),\ldots,(a_n)\right] = \left[(a_n),\ldots,(a_2),(a_1)\right] \label{invert}\end{equation}
\end{lemma}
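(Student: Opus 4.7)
The plan is to establish each of the six identities by exhibiting an explicit planar isotopy between the two rational shadows in question. Since a rational shadow is determined by its underlying 4-valent planar graph, each identity is purely geometric: two diagrams in the plane can be continuously deformed into one another without passing through or creating crossings. Before beginning, I would fix clear conventions for the base tangle $[\,]$, the 45$^\circ$ axis of reflection, and the direction in which twists are added, so that the recursive construction is unambiguous throughout.

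Identities \eqref{zlossl}, \eqref{zlossr}, and \eqref{zloss} can all be dispatched by the same observation: two consecutive reflections over the 45$^\circ$ axis compose to the identity. Two adjacent $0$-entries in the notation correspond to two reflections with no twists added in between, so the tangle built after those two steps is identical to the tangle before them. For \eqref{zlossl} and \eqref{zlossr} this means that a leading or trailing $(0),(0)$ contributes nothing and can be erased. For \eqref{zloss} the middle $0$ causes the pair of reflections flanking the $a_i$ and $a_{i+1}$ batches to restore the tangle to its previous orientation, so the two batches wind up in the same layer and combine into a single batch of $a_i + a_{i+1}$ twists.

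Identities \eqref{onecombl} and \eqref{onecombr} are the \emph{single-crossing absorption} identities: a lone crossing at the start (or end) of the recursion lies in a single layer, and the adjacent reflection places it into the same layer as the neighboring batch of $a_1$ (or $a_n$) twists, where a direct planar isotopy slides it into the batch and increases its size by one. For \eqref{invert}, the reversal identity, I would use the $180^\circ$ rotational symmetry of the closed rational shadow: rotating the diagram about the center of the tangle region preserves the pattern of twist batches but reverses the order in which they appear as one moves from the outside of the diagram inward, so it carries $[(a_1),\ldots,(a_n)]$ to $[(a_n),\ldots,(a_1)]$.

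The main obstacle is bookkeeping rather than any genuine difficulty: each identity is intuitively obvious from a picture, but a purely textual proof requires tracking how the 45$^\circ$ reflection relabels the sides of the tangle across recursive steps, and in \eqref{invert} requires verifying that the rotational symmetry is genuinely planar (i.e., that the closure arcs are placed symmetrically). Once the conventions are pinned down, every identity reduces to a short, mechanical planar isotopy that is most naturally verified by drawing the two diagrams side by side.
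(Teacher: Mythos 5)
Your handling of (\ref{onecombl})--(\ref{zlossr}) matches the paper's: the zero-loss identities come from the fact that two consecutive $45^\circ$ reflections cancel, and the one-combination identities come from absorbing a lone twist into the adjacent batch. The gap is in your proof of (\ref{invert}). In the standard form of $[(a_1),\ldots,(a_n)]$ the twist batches spiral outward from the base tangle: $(a_1)$ is adjacent to the core, and each subsequent batch spans the full side of everything built so far, so the batches are \emph{nested}, with $(a_1)$ innermost and $(a_n)$ outermost. A $180^\circ$ rotation about an axis perpendicular to the page is a homeomorphism of the plane fixing the point at infinity, so it preserves which faces of the diagram lie inside which others; in particular the innermost batch stays innermost. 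Your own description betrays the problem: a planar motion cannot ``reverse the order in which [the batches] appear as one moves from the outside of the diagram inward.'' So the rotation carries the closure of $[(a_1),\ldots,(a_n)]$ to a diagram still nested as $a_1,\ldots,a_n$ from the inside out, not to the closure of $[(a_n),\ldots,(a_1)]$, whose nesting is reversed.

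The paper treats (\ref{invert}) as the one genuinely non-obvious identity and proves it by turning the diagram \emph{inside out}: regarding the diagram as living on $S^2$ (where a strand may be pushed across the point at infinity without effect), one may exchange the innermost face with the unbounded face, and this is exactly the move that reverses the nesting order of the batches. If you want to salvage your approach, you must either adopt this spherical re-embedding argument, or work with a different normal form (e.g.\ a $4$-plat presentation in which the batches appear in a linear rather than nested order) and carefully verify that the symmetry you invoke both reverses the batch order and respects the closure arcs --- but as written, the rotation argument does not establish (\ref{invert}).
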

\begin{figure}[htb]
\begin{center}
\includegraphics[width=4in]
					{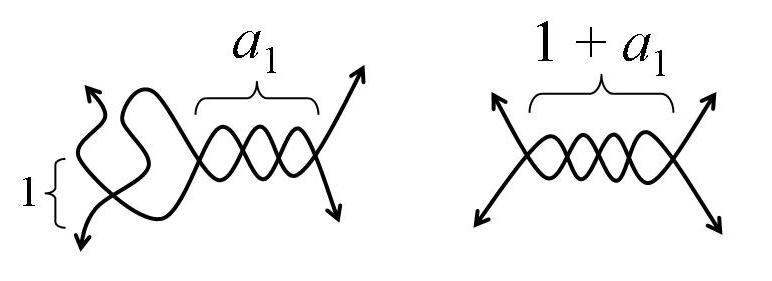}
\caption{Equation (\ref{onecombl}) holds because adding one twist to the bottom of Figure~\ref{base-tangle} and then $a_1$ twists on the right
is the same as adding $1+a_1$ twists to a sideways version of Figure~\ref{base-tangle}.}
\label{fig:onecombl}
\end{center}
\end{figure}
\begin{figure}[htb]
\begin{center}
\includegraphics[width=4in]
					{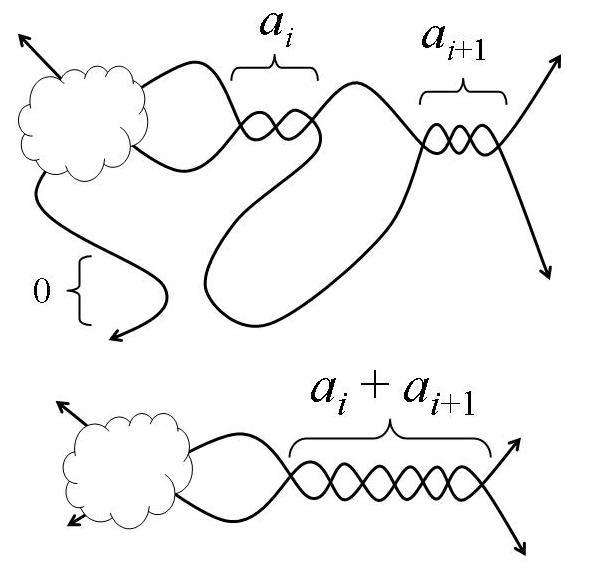}
\caption{Equation (\ref{zloss}) holds because adding $a_i$ twists on the right, $0$ twists on the bottom, and $a_{i+1}$ twists on the right of a tangle
is equivalent to simply adding $a_i + a_{i+1}$ twists on the right.}
\label{fig:zloss}
\end{center}
\end{figure}
\begin{proof}
Most of these can be easily seen by drawing pictures.  For example, (\ref{onecombl}) and (\ref{zloss}) follow by Figures~\ref{fig:onecombl}
and \ref{fig:zloss}, while (\ref{zlossl}) follows because twice reflecting Figure~\ref{base-tangle} has no effect.  The only non-obvious
equivalence is (\ref{invert}).  The equivalence here follows by turning everything inside out, as in Figure~\ref{inversion}.
\begin{figure}[htb]
\begin{center}
\includegraphics[width=4in]
					{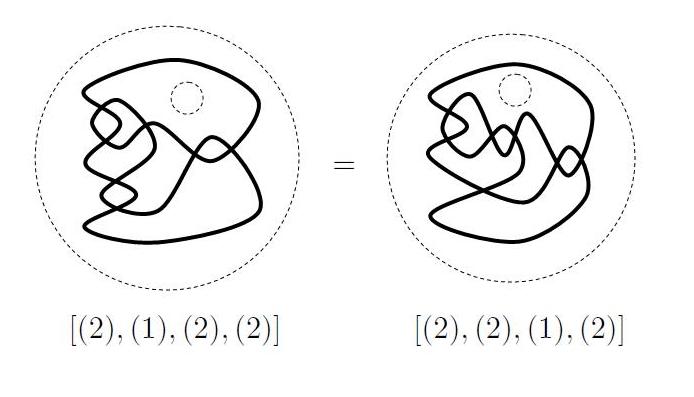}
\caption{These two knot shadows are essentially equivalent.  One is obtained from the the other by turning the diagram inside out,
exchanging the inner and outer dotted circles.}
\label{inversion}
\end{center}
\end{figure}
This works
because the diagram can be thought of as living on the sphere: note that the operation shown in Figure~\ref{knots-on-a-sphere} has no effect on a knot.
\begin{figure}[htb]
\begin{center}
\includegraphics[width=4in]
					{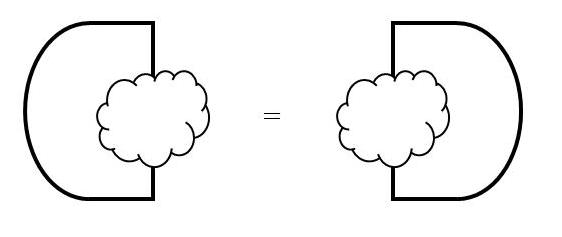}
\caption{Moving a loop from one side of the knot to the other has no effect on the knot.  So we might as well think
of knot diagrams as living on the sphere.}
\label{knots-on-a-sphere}
\end{center}
\end{figure}
\end{proof}

Similarly, we also have
\begin{lemma}
\begin{equation} \left[(0),(a_1 + 1),(a_2),\ldots,(a_n)\right] \stackrel{1}{\rightarrow} \left[(0),(a_1),(a_2),\ldots,(a_n)\right] \label{unwindl}\end{equation}
\begin{equation} \left[(a_1),\ldots,(a_{n-1}),(a_n + 1),0\right] \stackrel{1}{\rightarrow} \left[(a_1),\ldots,(a_{n-1}),(a_n),0\right] \label{unwindr}\end{equation}
\begin{equation} \left[\ldots,(a_i + 2),\ldots\right] \stackrel{2}{\rightarrow} \left[\ldots,(a_i),\ldots\right]\label{twoloss}\end{equation}
\end{lemma}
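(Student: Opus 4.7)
The plan is to verify each equation by drawing the picture local to the entry being modified; in all three cases the move is confined to a single twist region (or to one end of the tangle), so the rest of the entries can be carried along unchanged. This is analogous to the pictorial verification of the previous lemma.

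For the pseudo Reidemeister I identities (\ref{unwindl}) and (\ref{unwindr}), I would begin by drawing $[(0),(a_1+1)]$ and observing that, because the initial entry $(0)$ contributes no twists, the $a_1+1$ crossings of the second twist region sit on the outside of an otherwise trivial tangle. The outermost of these unresolved crossings then appears as a free loop at the boundary, exactly matching the local pattern of Figure~\ref{phonyr1}. A pseudo Reidemeister I move deletes that loop and converts $(a_1+1)$ into $(a_1)$; the remaining entries $(a_2),\ldots,(a_n)$ are untouched by the move, which yields (\ref{unwindl}). Equation (\ref{unwindr}) then follows either by running the mirror-image picture on the opposite end of the tangle, or by combining (\ref{unwindl}) with the reversal identity (\ref{invert}) from the previous lemma.

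For (\ref{twoloss}), I would zoom in on a single twist region containing $a_i + 2$ unresolved crossings. Two adjacent unresolved crossings in the same twist region are stacked in the same rotational sense, which is precisely the configuration on the left of Figure~\ref{phonyr2}. Erasing this pair with a pseudo Reidemeister II move leaves $(a_i)$ unresolved crossings in that region and leaves every other entry fixed.

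The main obstacle is conceptual rather than computational: one must verify that the outermost crossing in the first two cases, and the adjacent pair in the third, genuinely realise the local patterns required by the definitions of pseudo Reidemeister I and II moves. This amounts to checking that the recursive construction of $[a_1(b_1),\ldots,a_n(b_n)]$ places the relevant strands in exactly the positions shown in Figures~\ref{phonyr1} and \ref{phonyr2}, which is a straightforward picture inspection once the relevant portion of the diagram has been drawn.
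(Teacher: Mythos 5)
Your proposal is correct and is exactly the intended argument: the paper leaves this lemma as an exercise to the reader, and the expected proof is precisely the local picture inspection you describe, in the same spirit as the pictorial verifications of Equations (\ref{onecombl})--(\ref{invert}). In particular, your observations that the leading (or, via (\ref{invert}), trailing) $(0)$ entry forces the twisted endpoints to lie on a single arc, so that the outermost unresolved crossing is a monogon matching Figure~\ref{phonyr1}, and that two adjacent unresolved crossings in one twist region form the bigon of Figure~\ref{phonyr2}, are the right justifications.
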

The proof is left as an exercise to the reader.

\begin{lemma}\label{rational-to-unknot}
If $T$ is a rational knot shadow, then $T \stackrel{*}{\Rightarrow} [~]$.
\end{lemma}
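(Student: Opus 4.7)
The plan is to induct on the complexity $|T| := n + \sum_{i=1}^n b_i$ attached to a rational knot shadow $T = [(b_1), \ldots, (b_n)]$. The base case $|T| = 0$ forces $T = [~]$, which reduces to itself trivially. For the inductive step I would show that every nontrivial rational knot shadow admits at least one reduction drawn from the two preceding lemmas; each such move strictly decreases $|T|$ (only (\ref{invert}) leaves it unchanged, and it is used merely as a relabeling), so a single successful application immediately closes the induction.

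I would split into two main cases. First, if some $b_i \geq 2$, then a direct application of (\ref{twoloss}) suffices, shrinking $|T|$ by $2$. Otherwise every $b_i \in \{0,1\}$, and I would further break down based on the endpoints. Using (\ref{invert}) to symmetrize, I can assume either $b_1 = 1$ or $b_1 = b_n = 0$. When $b_1 = 1$ and $n \geq 2$, (\ref{onecombl}) absorbs the leading $1$ into $b_2$, dropping $|T|$ by one. When $b_1 = 1$ and $n = 1$, i.e.\ $T = [(1)]$, I reduce in a handful of explicit steps: first rewrite $[(1)] = [(1),(0)]$ using (\ref{onecombl}) applied to the right-hand side, then apply (\ref{unwindr}) to reach $[(0),(0)]$, and finally (\ref{zlossl}) to reach $[~]$. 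When $b_1 = b_n = 0$, I would first note that $n \geq 2$, since $[(0)]$ has associated Conway fraction $0/1$ with even numerator and so closes to a link by Theorem~\ref{fundrat}, contradicting our hypothesis; then according to $b_2 \in \{0,1\}$ I apply either (\ref{zlossl}) directly or (\ref{unwindl}) followed by (\ref{zlossl}), landing on the strictly shorter rational shadow $[(b_3), \ldots, (b_n)]$.

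The main subtlety, though not a deep obstacle, will be to verify that every reduction again produces a rational \emph{knot} shadow rather than a link shadow, so that the inductive hypothesis continues to apply. This is automatic: the six topological equivalences are planar isotopies, and pseudo Reidemeister I and II moves descend from honest Reidemeister moves on the underlying projection, so neither can alter the number of components of the closure. Once this bookkeeping is in place, the induction closes cleanly.
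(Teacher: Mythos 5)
Your proof is correct and follows essentially the same route as the paper's: both arguments induct on the size of the word $[(b_1),\ldots,(b_n)]$ (the paper phrases this as a minimal counterexample), use (\ref{twoloss}) to force every entry below $2$, and then dispatch the remaining $0$/$1$ endpoint cases via (\ref{onecombl}), (\ref{unwindl}), and (\ref{zlossl}), discarding $[(0)]$ as a two-component link. Your explicit check that each reduction preserves the number of components, and your use of (\ref{invert}) to symmetrize the endpoints, are minor points the paper handles implicitly by working only with the left end.
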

\begin{proof}
Let $T = \left[(a_1),\ldots,(a_n)\right]$ be a minimal counterexample.
Then $T$ cannot be reduced by any of the rules specified above.  Since any $a_i \ge 2$ can be reduced
by (\ref{twoloss}), all $a_i < 2$.  If $n = 0$, then $T = \left[~\right]$ which is
the unknot.  So $n$ is greater than 0, and $a_0$ is either $0$ or $1$.

\begin{itemize}
\item
If $a_0 = 0$ and $n > 1$, then either $a_1$ can be decreased
by $1$ using (\ref{unwindl}), or $a_0$ and $a_1$ can be stripped off via (\ref{zlossl}), contradicting minimality.
\item
If $a_0 = 0$ and $n = 1$, then $T = \left[(0)\right]$,
which is easily seen to be a two-component link, not a knot.
\item
If $a_0 = 1$ and $n > 1$, then $T$ reduces to
$\left[(a_2 + 1),\ldots,(a_n)\right]$ by (\ref{onecombl}), contradicting minimality.
\item
If $a_0 = 1$ and $n = 1$, then $T$ is $\left[(1)\right]$ which clearly reduces
to the unknot via a pseudo Reidemeister I move, a contradiction.
\end{itemize}
So in all four cases we have a contradiction.
\end{proof}

It then follows by Corollary~\ref{redux} that no rational
shadow has outcome K.  That is, no rational shadow exists which is a win for the Knotter no matter who goes first.

\section{Odd-Even Shadows}\label{sec:oddeven}
\begin{definition}
An \emph{odd-even shadow} is a shadow of the form \[\left[(a_1),(a_2),\ldots,(a_n)\right],\]
where exactly one of $a_1$ and $a_n$ is odd, and all other
$a_i$ are even.
\end{definition}
Note that these all have an odd number of crossings.
It is straightforward to verify from (\ref{onecombl}-\ref{twoloss})
that every odd-even shadow reduces by pseudo Reidemeister moves to the unknot.
In particular, by repeated applications of (\ref{twoloss}), we reduce to either
$[(0),\ldots,(0),(1)]$ or $[(1),(0),\ldots,(0)]$.  Then by applying (\ref{zlossl}) or
(\ref{zlossr}), we reach one of the following:
\[ [(1)], [(0),(1)], [(1),(0)].\]
Then all of these are equivalent to $[(1)]$ by (\ref{onecombl}) or (\ref{onecombr}).
So since every odd-even shadow reduces to the unknot, every odd-even shadow is an actual knot shadow, not a two-component link shadow.
Thus any odd-even shadow can be used as a position in the knotting-unknotting game.

\begin{lemma}\label{oddevenlemma}
If $T = [(a_1),(a_2),\ldots,(a_n)]$ is an odd-even shadow, then every option $T'$ of $T$ is either a fully resolved unknot,
or has an option which is equivalent to an odd-even shadow.
\end{lemma}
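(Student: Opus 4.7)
The plan is to proceed by direct case analysis on Knotter's move. By the symmetry~(\ref{invert}) I may assume $a_1$ is the unique odd entry and $a_2,\ldots,a_n$ are even. Knotter's move selects some position $i$ with $a_i\ge 1$ and resolves one crossing, producing $T' = [(a_1),\ldots,\epsilon(a_i-1),\ldots,(a_n)]$ for some $\epsilon\in\{+1,-1\}$. I want to exhibit, as an option of $T'$, a pseudodiagram topologically equivalent to an odd-even shadow, or else verify that $T'$ is already a fully resolved unknot.

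If $a_i\ge 2$, I would have Unknotter resolve a second crossing at position $i$ with the opposite sign $-\epsilon$. This places adjacent $+1$ and $-1$ resolved crossings at position $i$ with $a_i-2$ unresolved ones, and a genuine Reidemeister~II move cancels the $\pm 1$ pair, yielding a pseudodiagram equivalent to $[(a_1),\ldots,(a_i-2),\ldots,(a_n)]$. Since the parity of $a_i-2$ matches that of $a_i$, this is again an odd-even shadow, and we are done.

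The delicate case is $a_i = 1$: by the odd-even structure this forces $i=1$, so $T' = [\pm 1,(a_2),\ldots,(a_n)]$. If every $a_k$ with $k\ge 2$ is zero, then $T \sim [(1)]$ by the identities~(\ref{zlossl})--(\ref{zlossr}), and Knotter's move yields a fully resolved $\pm 1$ tangle, which is the unknot via a Reidemeister~I move; the ``fully resolved unknot'' clause of the lemma is satisfied. Otherwise, letting $k\ge 2$ be the smallest index with $a_k>0$, I would slide the resolved $\pm 1$ rightward through the intervening zero-twist regions using~(\ref{zloss}) and absorb it into position $k$ using~(\ref{onecombl}), obtaining $T' \sim [\pm 1\,(a_k),(a_{k+1}),\ldots,(a_n)]$. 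Because $a_k$ is a positive even integer we have $a_k\ge 2$, so Unknotter can resolve one of the $a_k$ unresolved crossings with the opposite sign $\mp 1$; a Reidemeister~II move then cancels the $\pm 1$ pair, leaving the pseudodiagram $[(a_k-1),(a_{k+1}),\ldots,(a_n)]$. Since $a_k-1$ is odd and the remaining $a_{k+1},\ldots,a_n$ are even, this is an odd-even shadow.

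The main obstacle I expect is justifying that the planar equivalences~(\ref{onecombl}) and~(\ref{zloss}), stated in Section~\ref{sec:rational} for shadows with all-unresolved entries $(a_i)$, remain valid when one of the twist regions being absorbed or slid past consists of a single resolved crossing. These transformations are ultimately isotopies of the underlying tangle diagram (on the sphere) and make no reference to the resolution data at any particular crossing, so the extension should be routine---but it is the one place where I need to go beyond the exact statements given in the paper. Once this is granted, the parity bookkeeping and the R2 cancellation of adjacent opposite-sign resolved crossings are straightforward.
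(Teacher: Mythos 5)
Your proof is correct and follows essentially the same route as the paper: after normalizing with (\ref{invert}), the Unknotter answers the Knotter's move with an oppositely-signed resolution in the same twist region and cancels the pair by a genuine Reidemeister II move. The only divergence is the $a_1=1$ subcase, which the paper eliminates up front by absorbing the $(1)$ into the next twist via (\ref{onecombl}) before any move is made (so that only $[(1)]$ survives as a special case), whereas you perform the same absorption after the Knotter's move by sliding the resolved crossing into the first nonzero even twist --- exactly the argument the paper reuses later for one-even pseudodiagrams --- so the two treatments are interchangeable.
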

In other words, if the Knotter makes any move in an odd-even shadow, then either he has ended the game with a losing move,
or the Unknotter has a replying move which returns the position to an odd-even shadow.
\begin{proof}
By Equation~\ref{invert}, we can assume without loss of generality that $a_1$ is odd and the other $a_i$ are even.
If $a_1 = 1$, then by Equation~(\ref{onecombl}) we can choose a smaller representation, unless $T$ is $[(1)]$.  So we can assume
that either $T = [(1)]$, or $a_1 > 1$.  In the first case, the sole option of $[(1)]$ is the unknot.  In the other case,
every nonzero $a_i$ is at least 2, so any move in any twist can be cancelled with a move in the same twist, as in Figure~\ref{undoing-move}
\begin{figure}[htb]
\begin{center}
\includegraphics[width=5in]
					{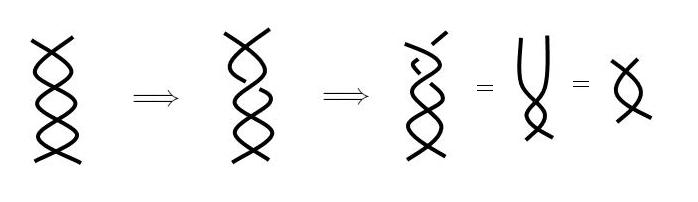}
\caption{The Unknotter responds to a twisting move by the Knotter with a cancelling twist in the opposite direction.}
\label{undoing-move}
\end{center}
\end{figure}
\end{proof}

\begin{theorem}\label{oddeven}
If $T$ is an odd-even shadow, then the normalized outcome of $T$ is $(U,U)$.
\end{theorem}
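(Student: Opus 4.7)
The plan is to prove, by strong induction on the number of unresolved crossings, that both $T$ and $T\#*$ lie in outcome class U. Since $T$ is odd we have $T^1 = T$ and $T^0 = T\#*$, so this is exactly the assertion that the normalized outcome is $(U,U)$. The base case is a one-crossing odd-even shadow, of which $[(1)]$ is representative; its sole option is the unknot, and direct inspection confirms that both $[(1)]$ and $[(1)]\#*$ lie in U.

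For the inductive step, I would first handle $T$ itself. By Lemma~\ref{rational-to-unknot}, $T$ reduces to the unknot, so Corollary~\ref{redux} applied to the odd pseudodiagram $T$ yields that $T$ has outcome U or 1; in particular $T$ is U1. It remains to verify that $T$ is U2, namely that every option $T'$ of $T$ is U1. This is where Lemma~\ref{oddevenlemma} does the work: either $T'$ is the fully resolved unknot (trivially U1), or $T'$ has an option $T''$ equivalent to a smaller odd-even shadow $S$ (the Unknotter's reply cancels the Knotter's twist, leaving $S$ with two fewer unresolved crossings). By the inductive hypothesis $S$ is in U, hence U2; since $T''$ is equivalent to $S$ it is also U2, so $T'$ is U1. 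Therefore $T$ is U2, and combined with U1 we get $T$ in class U.

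The argument for $T\#*$ runs in parallel. One half comes free: since $T$ is U2, Lemma~\ref{r1bland} gives that $T\#*$ is U1. To see that $T\#*$ is also U2, I would check that each of its options is U1. A move in the $*$ summand returns $T$ itself, which is in U and hence U1. A move in $T$ yields $T'\#*$ for some option $T'$ of $T$; if $T'$ is the unknot then $T'\#*$ is just $*$, which is trivially in U, and otherwise Lemma~\ref{oddevenlemma} supplies $T''$ equivalent to a smaller odd-even shadow $S$, so that $T''\#*$ is an option of $T'\#*$ equivalent to $S\#*$. The inductive hypothesis applied to $S$ tells us $S\#*$ is in U, hence U2, so $T'\#*$ is U1. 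Every option of $T\#*$ is therefore U1, so $T\#*$ is U2, and both $T$ and $T\#*$ lie in U. The main subtlety, and the reason for the joint induction, is precisely that analyzing options of $T\#*$ requires the inductive statement in its bundled form; trying to bootstrap the claim for $T\#*$ from results about $T$ alone would not close.
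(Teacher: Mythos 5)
Your proof is correct, and its core coincides with the paper's: both arguments rest on Lemma~\ref{oddevenlemma} to show that the Unknotter, playing second in $T$, can always restore an odd-even shadow (you phrase this as an induction on options, the paper as a mirroring strategy, but these are the same argument), and both obtain $T\in\mathrm{U1}$ from the fact that $T$ reduces to the unknot. Where you diverge is in handling $T\#*$. The paper never analyzes the options of $T\#*$ at all: since $T$ reduces to the unknot, Theorem~\ref{xyinequalities} forces $X(T)=1$, which by Definition~\ref{xydef} already says that $T^0=T\#*$ is U2 (and $T^1=T$ is U1); after that, only ``$T$ is U2'' needs a bespoke argument, and $T\#*\in\mathrm{U1}$ falls out of Lemma~\ref{r1bland}. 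You instead re-derive $T\#*\in\mathrm{U2}$ by a parallel option-by-option induction, which works but duplicates information the $X$-invariant supplies for free. In particular your closing remark---that the claim for $T\#*$ cannot be bootstrapped from results about $T$ alone, so the induction must be bundled---is not quite right: the paper does exactly that bootstrap, getting $T\#*\in\mathrm{U2}$ from $T\stackrel{*}{\Rightarrow}[~]$ and $T\#*\in\mathrm{U1}$ from $T\in\mathrm{U2}$, so no induction involving $T\#*$ is needed. Your version buys a more elementary, self-contained argument (only Corollary~\ref{redux} and Lemma~\ref{r1bland} are imported); the paper's buys brevity by letting the $X,Y$ machinery absorb three of the four required memberships. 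One small point worth tightening in your write-up: Lemma~\ref{oddevenlemma} as stated only promises an option equivalent to \emph{some} odd-even shadow, so you should note (as its proof shows) that the cancelling reply removes two unresolved crossings, which is what makes your induction on the number of unresolved crossings go through.
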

\begin{proof}
Equivalently (by Figure~\ref{diamond}), we need to show that $X(T) = Y(T) = 1$.
By Theorem~\ref{xyinequalities} and the fact that $T$ reduces to the unknot, $X(T) = 1$.  Because $T$ is odd,
$T = T^1$, so showing that $Y(T) = 1$ is the same as showing that
$T$ is U2.
In other words, we need to show that if the Knotter goes first and Unknotter goes second,
then the Unknotter wins under perfect play.

The Unknotter wins by the following strategy: in response to any move that does not end the game, she replies with a move to another
odd-even shadow.  Eventually the game ends with the Knotter moving to the unknot.  This strategy works by Lemma~\ref{oddevenlemma}.
\end{proof}
In Section~\ref{sec:sum} below, we will show that any connected sum of odd-even shadows also has normalized outcome $(U,U)$.
This will follow by showing that $T\#*$ is a zero game for every odd-even shadow $T$.  But first, we complete the classification
of rational shadows in the following section.

\section{The remaining cases}\label{sec:remainder}
For rational shadows which are not odd-even shadows, our strategy is to combine Corollary~\ref{21preserve} with the following
list of shadows, shown in Figure~\ref{the-irreducibles}:
\begin{lemma}\label{ohnoproof}
The following shadows have normalized outcome $(2,1)$:
\[ \left[(3),(1),(3)\right], \left[(2),(1),(2),(2)\right],
\left[(2),(2),(1),(2)\right], \left[(2),(1),(1),(2)\right],\]\[
\left[(2),(2),(1),(2),(2)\right], \left[(2),(2)\right]\]
\end{lemma}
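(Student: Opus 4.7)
The plan is to verify each assertion by direct game-tree computation, as anticipated by the introduction's reference to computer verification. Having normalized outcome $(2,1)$ amounts, via Figure~\ref{diamond}, to $X(T) = 1$ and $Y(T) = 3$: concretely $T^0$ must be both U2 and K2 (hence outcome $2$) and $T^1$ must be both U1 and K1 (hence outcome $1$). Three of the six listed shadows are odd and three are even, so verifying the claim reduces, in each case, to independently computing $\outcome(T)$ and $\outcome(T\#*)$ and matching the pair $(\outcome(T^0),\outcome(T^1))$ to $(2,1)$ according to the parity of $T$.

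To compute $\outcome(P)$ I would implement the recursion from Section~\ref{sec:outcomes}. A fully resolved leaf is declared U or K according to whether it represents the unknot; an internal node enumerates the two resolutions of each unresolved crossing, recurses, and aggregates via the rules that a node is U1 iff some child is U2, K1 iff some child is K2, and dually for U2 and K2. At a leaf, the unknot test on a rational closure reduces via Theorem~\ref{fundrat} to computing the continued fraction $a_n + 1/(a_{n-1}+1/(\cdots+1/a_1))$, writing it as $p/q$ in lowest terms, and checking whether $q/p \in \mathbb{Z}$. For $T\#*$ I would represent the $*$-summand as one extra unresolved crossing whose sole resolution returns the game to $T$, and use the genus-additivity discussion after Figure~\ref{modulo-moves} to declare a resolved composite the unknot iff each summand is.

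Several small observations keep the recursion tractable: the topological identities (\ref{onecombl})--(\ref{invert}) let one canonicalise intermediate shadows (in particular the reversal symmetry (\ref{invert}) collapses roughly half of all positions), and memoisation on canonical forms prunes repeated subtrees. The main obstacle, and the reason this step is delegated to a computer, is combinatorial size: the largest shadow $[(2),(2),(1),(2),(2)]$ has nine unresolved crossings, and $T\#*$ brings a tenth, producing a search space well beyond what is comfortable by hand. The Python code in Appendix~\ref{sec:py} implements exactly the recursion above and confirms the normalized outcome $(2,1)$ in each of the six cases.
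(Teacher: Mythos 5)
Your proposal is correct and takes essentially the same approach as the paper: both reduce the claim to a brute-force game-tree computation on the two projections of each shadow, using the continued-fraction unknot test of Theorem~\ref{fundrat} at the leaves, exactly as implemented in Appendix~\ref{sec:py}. The only inessential differences are that the paper observes $X(G)=1$ is already known from Lemma~\ref{rational-to-unknot} and Theorem~\ref{xyinequalities}, so strictly only the fact that $G^0$ is K2 needs the computer, and that four (not three) of the six listed shadows are odd.
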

\begin{proof}
Let $G$ be one of the specified shadows.  We already know that $X(G) = 1$, but we need to show (by Figure~\ref{diamond}) that $Y(G) = 3$.
By Definition~\ref{xydef} this amounts to showing that the even projection $G^0$ is K2.  This was proven by a computer
verification, making heavy use of Theorem~\ref{fundrat}.  See Appendix~\ref{sec:py} for details.
\end{proof}

\begin{figure}[htb]
\begin{center}
\includegraphics[width=4.5in]
					{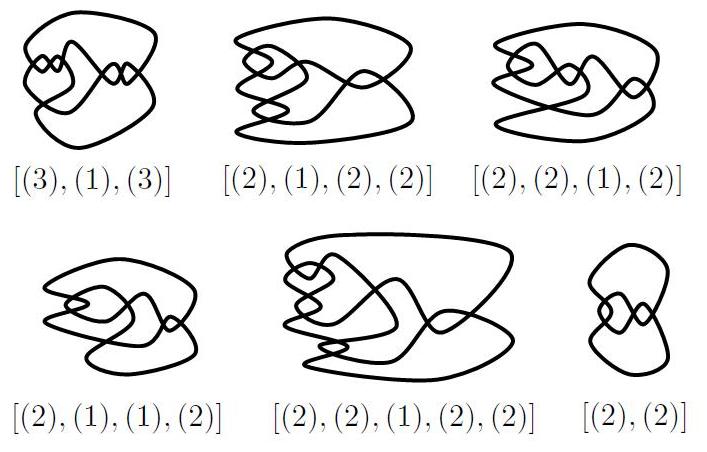}
\caption{The shadows of Lemma~\ref{ohnoproof}.}
\label{the-irreducibles}
\end{center}
\end{figure}

\begin{lemma}\label{minimal}
If $T = \left[(a_1),\ldots,(a_n)\right]$ is a rational knot shadow
with at least one crossing, then either $T \stackrel{1}{\Rightarrow} O$ for some odd-even shadow $O$, or
$T \stackrel{*}{\Rightarrow} A$, where $A$ is equivalent to one of the six shadows in Lemma~\ref{ohnoproof}.
\end{lemma}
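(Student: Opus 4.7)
The plan is strong induction on the total crossing count $N := a_1 + \cdots + a_n$. Using the topological equivalences (\ref{onecombl})--(\ref{invert}), I first put $T$ into a canonical form in which no interior $a_i$ is zero (merging via (\ref{zloss})), no double-zero sits at either end (stripping via (\ref{zlossl}), (\ref{zlossr})), and, when $n \ge 2$, neither $a_1$ nor $a_n$ equals $1$ (absorbing via (\ref{onecombl}), (\ref{onecombr})). If $T$ in this canonical form is already an odd-even shadow, then conclusion (A) holds with $O = T$ and an empty chain of R1 moves, so from now on we may assume $T$ is not odd-even.

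Next I try the pseudo Reidemeister moves in order of preference. If any R1 move (\ref{unwindl}) or (\ref{unwindr}) applies, I reduce $T \stackrel{1}{\to} T'$ and invoke the inductive hypothesis on $T'$: if $T'$ is in case (A) then so is $T$ (concatenation of R1 chains), and if $T'$ is in case (B) then so is $T$ (since $\stackrel{1}{\to}$ is a special case of $\stackrel{*}{\Rightarrow}$). If no R1 move applies, canonicality forces $a_1, a_n \ge 2$ (when $n \ge 2$), and I apply the R2 move (\ref{twoloss}) somewhere with $a_i \ge 2$. If the resulting $T'$ lies in case (B), then $T$ does too, since $\stackrel{2}{\to}$ is likewise contained in $\stackrel{*}{\Rightarrow}$.

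The delicate situation is when no R1 move applies, $T$ is not odd-even, and every available R2 reduction lands in case (A). Here I cannot recover (A) for $T$ directly, because an R2 move followed by an R1-only reduction to an odd-even shadow does not compose into an R1-only reduction of $T$. To handle it, I reduce further via repeated applications of (\ref{twoloss}) to bring every $a_i$ into a bounded range (say $\{1,2,3\}$) and then enumerate: the set of normalized rational shadows with $a_i \le 3$, $a_1, a_n \ge 2$, no R1 move applicable, and not odd-even, filtered by Theorem~\ref{fundrat} to retain only those that are genuine knot shadows, is a small finite list. A direct case check, organized by $n$ and by the parity vector $(a_1 \bmod 2, \ldots, a_n \bmod 2)$, shows that every such shadow is topologically equivalent to one of the six shadows of Lemma~\ref{ohnoproof}, or else $\stackrel{*}{\Rightarrow}$-reduces to one via a further R2 step followed by a topological merger. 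This finite enumeration is the main obstacle of the proof; the six shadows appearing in Lemma~\ref{ohnoproof} are precisely the minimal obstructions that survive all other reductions.
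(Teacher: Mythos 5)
Your overall architecture is the same as the paper's: canonicalize via the topological equivalences (\ref{onecombl})--(\ref{invert}), prefer pseudo Reidemeister I reductions (\ref{unwindl})--(\ref{unwindr}) so that case (A) composes, fall back on (\ref{twoloss}), and note correctly that case (A) does \emph{not} compose through an R2 move, which is exactly why a terminal case analysis is needed. The gap is in that terminal step: bounding every $a_i$ into $\{1,2,3\}$ via (\ref{twoloss}) does \emph{not} bound $n$, so the set you propose to enumerate is infinite, not ``a small finite list.'' Concretely, the family $T_k = [(2),(1),(1),\ldots,(1),(2)]$ with $k$ interior ones lies in your class for every $k$ (all entries positive and at most $2$, ends equal to $2$, no R1 move applicable, not odd-even), and its Conway fraction is $F_{k+5}/F_{k+3}$ in Fibonacci numbers; by Theorem~\ref{fundrat} this is a genuine knot shadow whenever $F_{k+5}$ is odd, i.e.\ for all $k \equiv 0, 2 \pmod 3$. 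So there are infinitely many cases, and no direct check ``organized by $n$ and the parity vector'' can terminate; to dispose of such a family you would need an induction on $k$ showing each $T_k$ reduces, which is precisely the work your sketch omits.

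What is missing is the combinatorial core of the paper's Lemma~\ref{subcase}, which bounds \emph{where} a $(1)$ can occur in a minimal counterexample, not just the sizes of the entries. The paper uses composite reduction sequences --- an application of (\ref{twoloss}) at an end, then clearing a twist region via (\ref{unwindl}), stripping the resulting double zero via (\ref{zlossl}), and absorbing a leftover $1$ via (\ref{onecombl}) --- to show that if $a_1 = 3$ then the unique $1$ must be $a_2$, and if $a_1 = 2$ then no $1$ occurs beyond $a_4$ (beyond $a_3$ when $a_3 = 2$), with symmetric constraints at the right end by (\ref{invert}). These positional constraints are what make the list finite (twelve candidate words, each checked by hand to be one of the six shadows, to reduce to $[(2),(2)]$, or to be a two-component link and hence excluded by hypothesis). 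Your phrase ``a further R2 step followed by a topological merger'' gestures at exactly these composite reductions, but without the positional bounding argument the enumeration cannot be completed. Two smaller points: the paper dispatches the all-even case separately (reducing to $[(2),(2)]$ or to the link $[(2)]$), which your scheme would absorb into the enumeration once it is made finite; and your filtering step should note explicitly that pseudo Reidemeister moves preserve the number of components, so a knot shadow can never reduce to a link shadow --- this is how the link cases in the enumeration are ruled out rather than merely set aside.
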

\begin{proof}
Let $T$ be a counterexample, minimizing $a_1 + \cdots + a_n + n$.  If $a_1$ or $a_n = 0$, then by Equation (\ref{unwindl}) or (\ref{unwindr}),
$T \stackrel{1}{\to} T'$ for some smaller rational shadow $T'$.  Then by choice of $T'$, either $T' \stackrel{1}{\Rightarrow} O$ for some
odd-even shadow $O$, or $T \stackrel{*}{\Rightarrow} A$, where $A$ is one of the six shadows in Lemma~\ref{ohnoproof}.  By transitivity,
the same is true of $T$, so $T$ is not a counterexample.

By the same logic, if $a_i = 0$ for any $1 < i < n$, then Equation~(\ref{zloss}) shows that $T = T'$ for some $T'$ with smaller $n$,
and similarly we have a contradiction.  Thus we can assume that every $a_i > 0$.  The same argument applied to Equations~(\ref{onecombl}) and (\ref{onecombr})
implies that if $a_1 = 1$ or $a_n = 1$, then $n = 1$ and $T$ is the odd-even shadow $[(1)]$, a contradiction.  So we can also assume that $a_1$ and $a_n$
are at least 2.

If all of the $a_i$ are even, then by repeatedly applying
(\ref{twoloss}) and (\ref{zloss}),
we can reduce $T$ down to either $\left[(2),(2)\right]$ or $\left[(2)\right]$.  But the second of these
is easily seen to be a two-component link, so $T \stackrel{*}{\Rightarrow} \left[(2),(2)\right]$ and we are done.

Otherwise, at least one of the $a_i$ is odd.  Then since the $a_i$ are positive, and $a_1, a_n > 1$,
the following lemma implies that $T$ reduces by pseudo Reidemeister moves
to one of the six shadows of Lemma~\ref{ohnoproof}.
\end{proof}

\begin{lemma}\label{subcase}
If $T = [(a_1),\ldots,(a_n)]$ is a knot shadow such that every $a_i > 0$, $a_1 > 1$, $a_n > 1$, and at least one $i$ has
$1 < i < n$ and $a_i$ odd, then $T \stackrel{*}{\Rightarrow} A$, where $A$ is equivalent to one of the six shadows in Lemma~\ref{ohnoproof}.
\end{lemma}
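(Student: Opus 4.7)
The plan is to reduce $T$ via the equations (\ref{onecombl})--(\ref{twoloss}) until it matches one of the six shadows in Lemma~\ref{ohnoproof}, then conclude by topological equivalence. The reduction proceeds by induction on $\sum_i a_i$, combined with case analysis on $n$.

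First, while some $a_i \geq 4$, I apply the pseudo-Reidemeister II move (\ref{twoloss}) to decrease $a_i$ by $2$. The subcase hypothesis is preserved: entries remain positive, boundary entries stay $\geq 2$ (since we only reduce when $a_i \geq 4$), the knot property is preserved by any pseudo-Reidemeister move, and each entry's parity is unchanged so some interior entry remains odd. Thus I may assume $a_i \in \{1,2,3\}$ for all $i$, and a further round of (\ref{twoloss}) on every interior $(3)$ leaves interior entries in $\{1,2\}$.

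With this normalization, I case-split on $n$. For $n = 3$: Theorem~\ref{fundrat} rules out $a_1 = a_3 = 2$ as a link (since then $p = a_1 a_2 a_3 + a_1 + a_3$ is even), and the remaining shadows $[(3),(1),(3)], [(3),(1),(2)], [(2),(1),(3)]$ either lie in the list or reduce to $[(2),(2)]$ by one further (\ref{twoloss}) on the boundary $(3)$ followed by (\ref{onecombl}) or (\ref{onecombr}). For $n \in \{4,5\}$: I enumerate interior patterns in $\{1,2\}^{n-2}$ and boundary patterns in $\{2,3\}^2$, discard the link combinations via Theorem~\ref{fundrat}, and exhibit an explicit reduction of each surviving knot case to one of the six (the targets being $[(2),(1),(2),(2)], [(2),(2),(1),(2)], [(2),(1),(1),(2)], [(2),(2),(1),(2),(2)], [(3),(1),(3)],$ or $[(2),(2)]$ depending on the parity of the boundaries and where the odd interior entry sits). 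For $n \geq 6$: either some interior entry equals $(2)$, which I turn into $(0)$ by (\ref{twoloss}) and absorb by (\ref{zloss}), decreasing $n$; or every interior entry equals $(1)$, and then combining (\ref{twoloss}) on boundary $(3)$s with repeated (\ref{onecombl})/(\ref{onecombr}) and, when a boundary becomes $(2)$ adjacent to a $(1)$, with (\ref{unwindl}) or (\ref{unwindr}) followed by (\ref{zlossl}) or (\ref{zlossr}), shrinks $T$ into one of the earlier cases.

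The main obstacle is that the order of reductions matters: because (\ref{zloss}) additively merges the neighbors of an absorbed $(0)$, a careless sequence of moves can collapse $T$ all the way to the unknot rather than landing on one of the six targets. For each case I must exhibit at least one reduction order that ends in the list, and in particular verify that absorbing an interior $(2)$ leaves a shorter shadow that either still satisfies the subcase hypothesis or (in the borderline case where the sole odd interior is adjacent to the absorbed $(2)$) reduces directly to one of the six by a short explicit check; the knot/link constraint from Theorem~\ref{fundrat} both structures this analysis and eliminates many parity combinations as non-knots.
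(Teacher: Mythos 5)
Your overall strategy -- use (\ref{twoloss}) to push every interior entry into $\{1,2\}$ and every boundary entry into $\{2,3\}$, then finish by a finite case analysis that discards non-knots via Theorem~\ref{fundrat} -- is the same as the paper's. The genuine gap is in the step you lean on for $n\ge 6$. You propose to pick an interior entry equal to $(2)$, turn it into $(0)$ by (\ref{twoloss}), and absorb it by (\ref{zloss}), claiming the result either still satisfies the hypotheses of the lemma or (when the sole odd interior entry is adjacent to the absorbed $(2)$) ``reduces directly to one of the six by a short explicit check.'' That claim does not hold for an arbitrary choice of which $(2)$ to absorb. Take $T=[(2),(2),(1),(2),(2),(2)]$: its fraction is $109/45$, so it is a knot shadow satisfying all the hypotheses. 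Absorbing the interior $(2)$ in position $2$ gives $[(2),(0),(1),(2),(2),(2)]=[(3),(2),(2),(2)]$, which has no odd interior entry, so the induction cannot continue; and chasing its further reductions by (\ref{onecombl})--(\ref{twoloss}) only ever produces shadows of length at most $2$ (such as $[(5),(2)]$, $[(3),(4)]$, $[(3),(2),(2)]$) that unwind to the unknot -- there is no apparent route to any of the six shadows of Lemma~\ref{ohnoproof}, so the ``short explicit check'' has nothing to find. This is precisely one of the ``careless'' reduction orders you warn about, except that it is the order your own prescription produces. (A good choice does exist here -- absorbing the $(2)$ in position $4$ gives $[(2),(2),(3),(2)]$, which works -- but your argument neither makes that choice nor proves one always exists; the correct reduction in the paper's scheme instead strips the two rightmost entries to land on $[(2),(2),(1),(2)]$.)

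Beyond that, essentially all of the content of the lemma lives in the finite verification you defer: for $n\in\{4,5\}$ you must enumerate roughly forty boundary/interior patterns, discard the links, and exhibit a terminating reduction order for each survivor, and since (as your own obstacle paragraph and the example above show) the order of moves matters, asserting that this can be done is not the same as doing it. The paper organizes this part differently and more economically: arguing with a minimal counterexample, it shows that whatever precedes (and, by Equation~(\ref{invert}), follows) any entry equal to $(1)$ must be one of six short strings, which simultaneously bounds $n$ and cuts the candidates to an explicit list of about a dozen shadows, each then checked by hand to be one of the six, reducible to $[(2),(2)]$, or a two-component link. You would need either to adopt a device of that kind or to prove a lemma guaranteeing a hypothesis-preserving absorption before your induction closes.
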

\begin{proof}
Again, let $T$ be a counterexample, minimizing $a_1 + \cdots + a_n$.  If $a_j > 2$ for some $1 < j < n$, then we have
\[ T \stackrel{2}{\to} T' = [(a_1),\ldots,(a_{j-1}),(a_j - 2),(a_{j+1}),\ldots,(a_n)]\]
by Equation~\ref{twoloss}.  But then $T'$ satisfies all the same hypotheses as $T$, and is smaller, so by choice of $T$, we have
\[ T \stackrel{2}{\to} T' \stackrel{*}{\Rightarrow} A,\]
for some $A$.  Then $T \stackrel{*}{\Rightarrow} A$, a contradiction.
It follows that $a_j > 2$ for all $1 < j < n$.  The same reasoning shows that $a_1$ and $a_n$ are at most
3.

So at this point we can assume that:
\begin{itemize}
\item If $1 < j < n$, then $a_j$ is 1 or 2.
\item If $j = 1$ or $n$, then $a_j$ is 2 or 3.
\item There is some $i$ with $1 < i < n$ and $a_i$ odd (that is $a_i = 1$).
\item There is no way to reduce $T$ by pseudo Reidemeister moves to another pseudodiagram satisfying the same hypotheses as $T$.
\end{itemize}

Suppose that $a_1 = 3$.  Take the greatest possible $i$ such that $a_i = 1$.  If $i > 2$, then we can reduce $a_1$ by two using (\ref{twoloss}),
and combine it into $a_2$ by (\ref{onecombl}) to yield a smaller counterexample, contradicting minimality.  So $i = 2$,
and therefore $a_2 = 1$ and $a_j \ne 1$ for $j > 2$.  Thus, if a sequence begins with $3$, the next
number must be $1$, and the $1$ must be unique.  For example, the sequence $\left[(3),(1),(1),(3)\right]$
can be reduced to $\left[(1),(1),(1),(3)\right]$ and thence to $\left[(2),(1),(3)\right]$, contradicting minimality.

Next suppose that $a_1 = 2$.  Again take the greatest possible $i$ such that $a_i = 1$.  Suppose for the sake of contradiction
that $i > 4$.  We can reduce
$T$ further by decreasing $a_1$ by two, using (\ref{twoloss}), then clear $a_2$ by applying (\ref{unwindl}) enough times.
This makes $a_1$ and $a_2$ both zero, after which we can remove both by (\ref{zlossl}), yielding a smaller $T$.  A further application
of (\ref{onecombl}) may be necessary to remove an initial 1.  As long as $i > 4$, the resulting shadow will be a smaller counterexample,
contradicting minimality of $T$.  Moreover, if $a_3 = 2$, then the
final application of (\ref{onecombl}) is unnecessary, so there is a contradiction if $i > 3$.  In other words, if $a_1 = 2$,
then no $1$'s can occur beyond $a_4$, and if $a_3 = 2$, then no $1$'s can occur beyond $a_3$.

Therefore, what precedes any $a_i = 1$ must be one of the following:
\begin{itemize}
\item $(3)$
\item $(2)$
\item $(2)(2)$
\item $(2)(1)$
\item $(2)(2)(1)$
\item $(2)(1)(1)$
\end{itemize}
and only the first three of these can precede the first $(1)$.
By symmetry, the same sequences reversed must follow any $(1)$ in sequence.  Then $T$ must be one of the following combinations:
\begin{itemize}
\item $\left[(3),(1),(3)\right]$
\item $\left[(3),(1),(2)\right]$ and its reverse
\item $\left[(3),(1),(2),(2)\right]$ and its reverse
\item Not $\left[(3),(1),(1),(2)\right]$ because more than just $(3)$ precedes the second $(1)$.
\item $\left[(2),(1),(2)\right]$
\item $\left[(2),(1),(2),(2)\right]$ and its reverse
\item $\left[(2),(1),(1),(2)\right]$
\item $\left[(2),(1),(1),(2),(2)\right]$ and its reverse
\item $\left[(2),(1),(1),(1),(2)\right]$
\item $\left[(2),(2),(1),(2),(2)\right]$
\item $\left[(2),(2),(1),(1),(2),(2)\right]$
\item Not $\left[(2),(2),(1),(1),(1),(2)\right]$ because too much precedes the last $(1)$.
\end{itemize}
So either $T$ is one of the combinations in Lemma \ref{ohnoproof} or one of the following
happens:
\begin{itemize}
\item $\left[(3),(1),(2)\right]$ reduces by Equation (\ref{twoloss}) to $\left[(1),(1)
,(2)\right] = \left[(2),(2)\right]$, one of the shadows of  Lemma~\ref{ohnoproof}.  So does its reverse.
\item $\left[(3),(1),(2),(2)\right]$ reduces by two pseudo Reidemeister II moves to
\[\left[(3),(1),(0),(0)\right] = \left[(3),(1)\right] = \left[(4)\right]\] which is a two-component link shadow,
not a knot shadow.  Nor is its reverse.
\item $\left[(2),(1),(2)\right]$ reduces by a pseudo Reidemeister II move to $\left[(0),(1),(2)\right]$,
which in turn reduces by a pseudo Reidemeister I move to $\left[(0),(0),(2)\right] = \left[(2)\right]$
which is a two-component link, not a knot.
\item $\left[(2),(1),(1),(2),(2)\right]$ reduces by pseudo Reidemeister moves to
\[\left[(2),(1),(1),(0),(2)\right] = \left[(2),(1),(3)\right]\] which as noted above reduces to $\left[(2),(2)\right]$.
\item $\left[(2),(1),(1),(1),(2)\right]$ likewise reduces by a pseudo Reidemeister II move and
a I move to \[\left[(0),(0),(1),(1),(2)\right] = \left[(1),(1),(2)\right] = \left[(2),(2)\right],\]
one of the shadows of Lemma~\ref{ohnoproof}.
\item $\left[(2),(2),(1),(1),(2),(2)\right]$ reduces by a pseudo Reidemeister II move to
\[\left[(2),(0),(1),(1),(2),(2)\right] = \left[(3),(1),(2),(2)\right],\] which as noted above is a
two-component link shadow, not a knot shadow.
\end{itemize}
In other words, either $T$ is not a knot shadow, $T$ is one
of the shadows of Lemma~\ref{ohnoproof}, or $T$ reduces by pseudo Reidemeister moves to one of these shadows, specifically
$[(2),(2)]$.
The first case contradicts the assumption that $T$ is a knot shadow, and the other cases contradict
the assumption that $T$ is a counterexample.
\end{proof}
In summary then, every $T$ that does not reduce by pseudo Reidemeister I moves to
an odd-even shadow reduces down to a finite set of minimal cases.  Each of these minimal cases
is either reducible to one of the six shadows in Lemma \ref{ohnoproof}, or is not actually
a knot.

Consequently, this gives a rule for determining the outcome of a rational shadow:
\begin{theorem}\label{mainresult}
Let $T$ be a rational shadow.  If $T \stackrel{1}{\Rightarrow} O$ for some odd-even shadow $O$,
then $T$ is a win for the Unknotter, no matter which player goes first.  Otherwise, if $T$ is even
then $T$ is a win for the second player, and if $T$ is odd then $T$ is a win for the first player.
\end{theorem}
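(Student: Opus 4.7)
The plan is to invoke Lemma~\ref{minimal} to split $T$ into one of two structured cases, and then read off the outcome of $T$ from the normalized-outcome data already packaged in Theorem~\ref{oddeven} and Lemma~\ref{ohnoproof}. The unknot case $T=[~]$ is trivial and may be handled separately, so we may focus on $T$ with at least one crossing. Lemma~\ref{minimal} then hands us either (i) $T \stackrel{1}{\Rightarrow} O$ for some odd-even shadow $O$, or (ii) $T \stackrel{*}{\Rightarrow} A$ where $A$ is one of the six shadows in Lemma~\ref{ohnoproof}.

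In case (i), I would first use Theorem~\ref{oddeven} to note that $O$ has normalized outcome $(U,U)$, which is the same as saying $X(O)=Y(O)=1$. Theorem~\ref{xyinequalities} tells us that $\stackrel{1}{\to}$, and hence $\stackrel{1}{\Rightarrow}$, preserves both $X$ and $Y$, so $X(T)=Y(T)=1$, forcing $T$ to also have normalized outcome $(U,U)$. Since $T = T^{\pi(T)}$, both projections of $T$ have outcome $U$, so $T$ itself has outcome $U$; that is, the Unknotter wins regardless of who moves first, matching the theorem's first clause.

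In case (ii), Lemma~\ref{ohnoproof} says $A$ has normalized outcome $(2,1)$, and Corollary~\ref{21preserve} is tailored exactly to propagate this along $\stackrel{*}{\Rightarrow}$: applying it yields that $T$ has normalized outcome $(2,1)$ as well. Thus $T^0$ has outcome $2$ and $T^1$ has outcome $1$, and extracting $T = T^{\pi(T)}$ gives outcome $2$ when $T$ is even and outcome $1$ when $T$ is odd, exactly as claimed. A small bookkeeping point to confirm is that the two Lemma~\ref{minimal} alternatives cannot disagree about $T$'s outcome: in case (i) one gets $Y(T)=1$ while in case (ii) one gets $Y(T)=3$, so they are mutually exclusive on any given $T$, and the ``otherwise'' clause of the theorem lines up with alternative (ii).

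There is really no obstacle left at this stage; the strategic substance has already been absorbed into Theorem~\ref{oddeven} (odd-even shadows are wins for the Unknotter), Lemma~\ref{ohnoproof} (the six minimal cases verified by computer), Theorem~\ref{xyinequalities} (monotonicity of $X$ and $Y$ under pseudo Reidemeister moves), and Lemma~\ref{minimal} (the combinatorial reduction of an arbitrary rational shadow to one of these two regimes). The proof of Theorem~\ref{mainresult} is then a short assembly of these four ingredients, with the only care needed being the pedantic reading of extended versus normalized outcomes when one recovers $\outcome(T)$ from $(\outcome(T^0),\outcome(T^1))$ via $T = T^{\pi(T)}$.
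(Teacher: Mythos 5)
Your proposal is correct and follows essentially the same route as the paper: Lemma~\ref{minimal} splits into the two cases, Theorem~\ref{oddeven} plus the $X,Y$-preservation of Theorem~\ref{xyinequalities} handles the odd-even case, and Lemma~\ref{ohnoproof} plus Corollary~\ref{21preserve} handles the rest, with the outcome read off from $T = T^{\pi(T)}$. Your extra remarks on the trivial shadow $[~]$ and on the mutual exclusivity of the two cases (via $Y(T)=1$ versus $Y(T)=3$) are sensible additions but do not change the argument.
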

\begin{proof}
If $T \stackrel{1}{\Rightarrow} O$, then by Theorem~\ref{xyinequalities}, $X(T) = X(O)$
and $Y(T) = Y(O)$, so $T$ and $O$ have the same normalized outcome.  But by Theorem~\ref{oddeven},
$O$ has normalized outcome $(U,U)$.  So $T$ has normalized and extended outcomes $(U,U)$, and therefore its outcome
class is $U$.

Otherwise, by Lemma~\ref{minimal}, $T \stackrel{2}{\Rightarrow} A$ for $A$ equivalent to one of the six shadows
of Lemma~\ref{ohnoproof}.  Lemma~\ref{ohnoproof} says that $A$ has normalized outcome $(2,1)$, so by Corollary~\ref{21preserve},
the normalized outcome of $T$ must also be $(2,1)$.  Then if $T$ is even,
$T$ has extended outcome $(2,1)$, and if $T$ is odd, then $T$ has extended outcome $(1,2)$.  So the outcome of $T$
is 2 if $T$ is even, and 1 if $T$ is odd.
\end{proof}

Note that this tells us who wins rational \emph{shadows}, in which every crossing is unresolved.  It is still an open question
to determine who wins rational \emph{pseudodiagrams} in general.

As a special case of Theorem~\ref{mainresult}, we can rederive a result of~\cite{KnotGames}.
If $n \ge 2$ and $T = [(a_1),(a_2),\ldots,(a_n)]$ is a rational knot shadow
for which every $a_i$ is even, then $T$ cannot reduce by pseudo Reidemeister I moves at all, and is not an odd-even shadow.
So by Theorem~\ref{mainresult}, it has outcome class 2, because it has even parity.  This was Theorem 2 of \cite{KnotGames}.

\section{Sums of rational shadows}\label{sec:sum}

We now proceed to determine the outcome of all sums of rational knot shadows.  The key trick is to realize that
odd-shadows are essentially zero games.

\begin{definition}
A \emph{one-even} pseudodiagram is a pseudodiagram of the form \[ [\pm1 (a_1),(a_2),\ldots,(a_n)]\]
or \[ [(a_1),(a_2),\ldots,\pm1(a_n)] \] for $n \ge 1$ and even $a_i$.
\end{definition}
It is clear by the argument of Figure~\ref{undoing-move} that every option of a one-even pseudodiagram
has an option which is equivalent to a one-even pseudodiagram.  Moreover, we have the following
\begin{lemma}
Every one-even pseudodiagram is either fully resolved as an unknot, or has an option which is equivalent to an odd-even shadow.
\end{lemma}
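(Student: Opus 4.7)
Our plan is to split on whether $T$ is fully resolved, and otherwise to produce the Unknotter's desired option explicitly. By the symmetry of Equation (\ref{invert}), we may assume throughout that $T = [\pm 1(a_1), (a_2), \ldots, (a_n)]$; the mirror form is handled by reversing the sequence.

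If $T$ is fully resolved, then $a_i = 0$ for all $i$, so $T$ equals $[\pm 1(0), (0), \ldots, (0)]$. Iterated applications of the pseudodiagram analogs of (\ref{onecombl})--(\ref{zlossr})---which are purely geometric and insensitive to whether the crossings involved are resolved---collapse $T$ down to $[\pm 1]$. By Theorem \ref{fundrat}, this closure satisfies $q/p = 1/(\pm 1) \in \mathbb{Z}$, so it is the unknot.

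If $T$ is not fully resolved, pick the smallest index $j$ with $a_j > 0$; then $a_j \geq 2$ because each $a_i$ is even. If $j > 1$, the positions $1,\ldots,j-1$ are empty and the single resolved crossing at position $1$ can be slid past them using pseudodiagram analogs of (\ref{onecombl}) and (\ref{zloss}), producing an equivalent form $[\pm 1(a_j), (a_{j+1}), \ldots, (a_n)]$. So we may assume $a_1 \geq 2$. The Unknotter then resolves one of the $a_1$ unresolved crossings at position $1$ to the sign opposite the existing $\pm 1$. Since the order of crossings within a twist region is strategically and topologically immaterial, we may place the two resolved crossings adjacently; being of opposite sign in the same twist, they cancel via a Reidemeister II move. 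The result is the shadow $[(a_1 - 1), (a_2), \ldots, (a_n)]$, which is an odd-even shadow since $a_1 - 1$ is odd while every other $a_i$ is even. It is a genuine knot shadow rather than a link shadow, because resolving a crossing, R2, and the planar isotopies used do not change the number of components of the closure, so knottedness is inherited from $T$.

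The main obstacle I anticipate is justifying the sliding step: establishing the pseudodiagram analogs of (\ref{onecombl}) and (\ref{zloss}) with enough care to transport a lone resolved crossing of definite sign across a block of leading zero positions without altering the equivalence class of the pseudodiagram. These extensions are geometrically clear from Figures \ref{fig:onecombl} and \ref{fig:zloss}, since those arguments nowhere use that the crossings involved are unresolved, but they do require careful bookkeeping of signs. Once the sliding step is in hand, the remaining Reidemeister II cancellation in the $j = 1$ case is immediate.
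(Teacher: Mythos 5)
Your proof is correct and follows essentially the same route as the paper's: reduce to the form $[\pm1(a_1),(a_2),\ldots,(a_n)]$, absorb the lone resolved crossing past any leading empty twist regions (the pseudodiagram analog of (\ref{onecombl})), and then either conclude the diagram is the fully resolved unknot or resolve one crossing of the first nonempty twist with the opposite sign so that the pair cancels by Reidemeister II, leaving the odd-even shadow $[(a_1-1),(a_2),\ldots,(a_n)]$. The only differences are cosmetic: you front-load the fully resolved case and explicitly check the component count, whereas the paper reaches the unknot case as the terminal point of the same reduction.
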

\begin{proof}
First consider the case where $T$ is of the form $[\pm1(a_1),(a_2),\ldots,(a_n)]$.  If $a_1$ is zero and $n > 1$, then for
reasons analogous to Equation~\ref{onecombl}, we see that
\[ T = [\pm1,(a_2),\ldots,(a_n)] = [\pm1(a_2),\ldots,(a_n)].\]
Repeating this if necessary, we can assume without loss of generality that either $a_1 > 0$ or $n = 1$.  If $a_1 > 0$,
then an appropriate move in $(a_1)$ produces
\[ [0(a_1 - 1),(a_2),\ldots,(a_n)],\]
an odd-even shadow.  If $a_1 = 0$, then $n = 1$, and $T$ is $[1(0)] = [1]$, which is a fully resolved unknot.

The case where $T$ is of the form $[(a_1),(a_2),\ldots,1(a_n)]$ is handled similarly.
\end{proof}
From this we can produce a family of zero games:
\begin{lemma}
If $T$ is a one-even pseudodiagram, then $T$ is a zero game.  Also, if $S$ is an odd-even shadow, then $S\#*$ is a zero game.
\end{lemma}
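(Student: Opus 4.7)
The plan is to prove both statements together by induction on the total number of unresolved crossings in the position. Each of the two positions in question is even: a one-even pseudodiagram has $\sum a_i$ unresolved crossings with every $a_i$ even, and $\pi(S\#*) \equiv \pi(S) + \pi(*) \equiv 1 + 1 \equiv 0 \pmod 2$.

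For the first statement, let $T$ be one-even. If $T$ is a fully resolved unknot, then $T$ has no options and is trivially a zero game. Otherwise, the preceding lemma supplies an option of $T$ equivalent to some odd-even shadow $O$; by Theorem~\ref{oddeven}, $O$ has normalized outcome $(U,U)$, and since $O$ is odd $O = O^1$, so $O$ is U and in particular U2. Thus $T$ has a U2 option and is U1. For the branching condition in the definition of zero game, the observation preceding the previous lemma provides, for every option $Q$ of $T$, an option $R$ of $Q$ equivalent to a one-even pseudodiagram; by the inductive hypothesis $R$ is a zero game.

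For the second statement, let $S = [(a_1), \ldots, (a_n)]$ be an odd-even shadow. Resolving the $*$ crossing in $S\#*$ yields a position equivalent to $S$, which is U (hence U2) by Theorem~\ref{oddeven}; so $S\#*$ has a U2 option and is U1. For the branching condition, consider any option $Q$ of $S\#*$. If $Q = S'\#*$ for some option $S'$ of $S$, Lemma~\ref{oddevenlemma} yields two subcases: either $S'$ is a fully resolved unknot (forcing $S = [(1)]$ and $Q = *$, whose unique option is the unknot, a zero game), or $S'$ has an option $S''$ equivalent to an odd-even shadow, in which case $S''\#*$ is an option of $Q$ that is a zero game by the inductive hypothesis on the second statement. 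If instead $Q = S$, have the Unknotter resolve one crossing at the unique odd position $j \in \{1, n\}$; using equation~(\ref{invert}) to take $j = 1$, the result is $[\pm 1(a_1 - 1), (a_2), \ldots, (a_n)]$ with $a_1 - 1$ and every remaining entry even, hence a one-even pseudodiagram, which is a zero game by the first statement at the same level of the induction.

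The main obstacle is the $Q = S$ subcase of the second statement. Here $S$ itself cannot be a zero game (zero games are even by Lemma~\ref{zlem}, but $S$ is odd), and generic options of $S$ are neither odd-even shadows nor one-even pseudodiagrams. The decisive observation is that resolving exactly at the unique odd twist region of $S$ removes the parity obstruction and produces a one-even pseudodiagram, which returns us to the first statement and closes the joint induction.
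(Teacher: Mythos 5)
Your proposal is correct and follows essentially the same route as the paper: establish that every one-even pseudodiagram is a zero game by exhibiting an odd-even-shadow option (for U1) and a one-even reply to every option (for the branching condition), then handle $S\#*$ by splitting on whether the opponent moved in $*$ (reply in the odd twist to reach a one-even pseudodiagram) or in $S$ (reply via Lemma~\ref{oddevenlemma} to reach $S''\#*$ or the unknot). The only cosmetic difference is that you run a single joint induction on the number of unresolved crossings where the paper proves the first claim in full before the second, and you derive U1 for $S\#*$ from its option $S$ rather than directly from $S\#* = S^0$; both are sound.
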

\begin{proof}
We first show inductively that every one-even pseudodiagram is a zero game.  Let $T$ be a one-even pseudodiagram.
Then every option of $T$ has an option which is a one-even pseudodiagram, so by induction, every option of $T$
has an option which is a zero game.  Also, $T$ is either a fully resolved unknot (in which case it is definitely U1),
or $T$ has an option which is an odd-even shadow.
In this case, it follows that $T$ is U1 because the Unknotter can move from $T$ to an odd-even shadow, which is a win for the Unknotter
by Theorem~\ref{oddeven}.  So $T$ is a zero game.

Next we show inductively that if $S$ is an odd-even shadow, then $S\#*$ is a zero game.  First of all, $S\#* = S^0$ is U1 by Theorem~\ref{oddeven}.
So it remains to show that we can respond to any move from $S\#*$ with a move producing a zero game.

First of all suppose that our opponent moved in $*$, turning it into the unknot.  Then the total game becomes equivalent to $S$.  By moving
in the odd twist of $S$, we can produce a one-even pseudodiagram.  We just showed that these are all zero games.

Next suppose that our opponent moved in $S$, producing a total position of the form $S'\#*$, where $S'$ is some option of $S$.  Then
by Lemma~\ref{oddevenlemma}, either $S'$ is a fully resolved unknot, or $S'$ has an option $S''$ which is another odd-even shadow.
If $S'$ is a fully resolved unknot, then $S'\#*$ is equivalent to $*$, from which we can move to a fully resolved unknot, which is a zero game.
Otherwise, we can move to $S''\#*$, which is a zero game by induction.
\end{proof}

\begin{lemma}\label{odd-even-sum-normalized}
A sum of odd-even shadows has normalized outcome $(U,U)$.
\end{lemma}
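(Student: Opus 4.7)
The plan is to prove, by induction on the number $n$ of summands, the stronger statement that for $T = S_1 \# \cdots \# S_n$ a sum of odd-even shadows, both $T$ and $T\#*$ have outcome class $U$. This is enough for the lemma, because the unordered pair $\{T^0, T^1\}$ equals $\{T, T\#*\}$, so its normalized outcome $(\outcome(T^0),\outcome(T^1))$ equals $(U,U)$ regardless of the parity of $n$.

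For the base case $n=1$, Theorem~\ref{oddeven} gives $\outcome(S_1) = U$ directly, while the previous lemma shows that $S_1 \# *$ is a zero game, and zero games have outcome $U$ by Lemma~\ref{zlem}. This handles both components.

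For the inductive step, let $T_n = T_{n-1} \# S_n$ and assume the claim for $T_{n-1}$. The key tool is that $S_n \# *$ is a zero game (by the lemma immediately preceding), together with the fact that $*\#*$ is a zero game (Corollary~\ref{r1}). Theorem~\ref{zerogames} lets us delete either of these from any sum without changing the outcome. Thus, using associativity/commutativity of $\#$,
\[
\outcome(T_n \# *) \;=\; \outcome\bigl(T_{n-1} \# (S_n\#*)\bigr) \;=\; \outcome(T_{n-1}) \;=\; U,
\]
where the middle equality applies Theorem~\ref{zerogames} to the zero game $S_n\#*$ and the last uses the inductive hypothesis. For $T_n$ itself, insert a redundant $*\#*$:
\[
\outcome(T_n) \;=\; \outcome(T_n \# *\# *) \;=\; \outcome\bigl(T_{n-1} \# (S_n\#*)\# *\bigr) \;=\; \outcome(T_{n-1}\# *) \;=\; U,
\]
again by Theorem~\ref{zerogames} (first with zero game $*\#*$, then with zero game $S_n\#*$) and then the inductive hypothesis.

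There is no serious obstacle here; the entire argument is bookkeeping with zero games, and all of the strategic content has already been packaged into the preceding lemma (that $S\#*$ is a zero game for every odd-even shadow $S$) and into Theorem~\ref{zerogames}. The only mild subtlety worth flagging is conceptual rather than technical: the odd-even shadow $S_n$ is \emph{not} itself a zero game (by Lemma~\ref{zlem} every zero game is even, while odd-even shadows are odd), which is why we cannot simply strip summands off $T$ directly and must instead pair them with an auxiliary $*$.
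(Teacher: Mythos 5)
Your proof is correct and uses the same essential ingredients as the paper's: the preceding lemma that $S\#*$ is a zero game for every odd-even shadow $S$, Theorem~\ref{zerogames} to strip those summands off, Corollary~\ref{r1} to absorb pairs of $*$'s, and Theorem~\ref{oddeven} for the single-summand case. The paper does this in one direct chain of outcome-preserving reductions rather than by induction on the number of summands, but that is purely a difference of bookkeeping, not of substance.
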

\begin{proof}
Let $\Sigma = S_1 \# S_2 \# \cdots \# S_n$ be a sum of odd-even shadows.  Consider the positions
\[ S_1\#*\#S_2\#*\#\cdots\#S_n\#*\]
and
\[ S_1\#*\#S_2\#*\#\cdots\#S_n\]
By repeated applications of Corollary~\ref{r1}, these have the same outcomes as $\Sigma^0$ and $\Sigma^1$ respectively.
But since $S_i\#*$ is a zero game for every $i$, by repeated applications of Theorem~\ref{zerogames}, these also have the same outcomes
as $S_n\#*$ and $S_n$ respectively.  But both $S_n\#*$ and $S_n$ have outcome U by Theorem~\ref{oddeven}, so both $\Sigma^0$
and $\Sigma^1$ do too.
\end{proof}

Using these results we generalize Theorem~\ref{mainresult}
\begin{theorem}
Let $T_1, \ldots, T_n$ be rational shadows, and let $S = T_1\#T_2\# \cdots \#T_n$ be their connected sum.  If for every $T_n$ there exists an odd-even shadow $O_n$ with
$T_n \stackrel{1}{\Rightarrow} O_n$, then $S$ is a win for the Unknotter, no matter which player goes first.
Otherwise,
\begin{itemize}
\item If $S$ has an even number of crossings, then $S$ is a win for whichever player goes second.
\item If $S$ has an odd number of crossings, then $S$ is a win for whichever player goes first.
\end{itemize}
\end{theorem}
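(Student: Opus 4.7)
The plan is to split along the two cases of the hypothesis, in each case reducing the problem to a single-shadow statement that has already been established. Throughout, I assume without loss of generality that each $T_i$ has at least one unresolved crossing, since an unknot summand does not affect the game.

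In the first case, every $T_i$ reduces via pseudo Reidemeister I moves to some odd-even shadow $O_i$. Iterating Remark~\ref{rem} one summand at a time lifts these reductions to the sum, giving $S = T_1 \# \cdots \# T_n \stackrel{1}{\Rightarrow} O_1 \# \cdots \# O_n =: \Sigma$. Lemma~\ref{odd-even-sum-normalized} says that $\Sigma$ has normalized outcome $(U,U)$, equivalently $X(\Sigma) = Y(\Sigma) = 1$. Because $S \stackrel{1}{\Rightarrow} \Sigma$ uses only pseudo Reidemeister I moves, iterating the equalities in Theorem~\ref{xyinequalities} gives $X(S) = X(\Sigma) = 1$ and $Y(S) = Y(\Sigma) = 1$, so $S$ too has normalized outcome $(U,U)$. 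Since $S = S^{\pi(S)}$, the outcome of $S$ is $U$ regardless of parity, which is the claimed Unknotter win.

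In the second case, fix an index $j$ for which $T_j$ does not reduce by pseudo Reidemeister I moves to any odd-even shadow. Applying Lemma~\ref{minimal} to $T_j$, the first alternative of that lemma fails, so $T_j \stackrel{*}{\Rightarrow} A_j$ for some $A_j$ equivalent to one of the six shadows in Lemma~\ref{ohnoproof}. For every $i \neq j$, the shadow $T_i$ is rational, so Lemma~\ref{rational-to-unknot} yields $T_i \stackrel{*}{\Rightarrow} [~]$. Combining these reductions summand by summand via Remark~\ref{rem}, and discarding the resulting unknot summands, gives $S \stackrel{*}{\Rightarrow} A_j$. Lemma~\ref{ohnoproof} tells us $A_j$ has normalized outcome $(2,1)$, so Corollary~\ref{21preserve} gives the same for $S$. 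Thus $\outcome(S^0) = 2$ and $\outcome(S^1) = 1$, and because $S = S^{\pi(S)}$, the outcome of $S$ is $2$ when $S$ has an even number of crossings and $1$ when it has an odd number of crossings, matching the statement.

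This theorem is essentially a packaging of Theorem~\ref{mainresult} (the single-shadow case) together with Lemma~\ref{odd-even-sum-normalized}, so there is no serious obstacle. The only slightly nonobvious point is that reductions of individual summands really do combine into a reduction of the entire sum, which is exactly the content of Remark~\ref{rem}; together with Theorem~\ref{xyinequalities} and Corollary~\ref{21preserve} this pins down the normalized outcome of $S$ without any fresh game-theoretic analysis at the level of the sum itself.
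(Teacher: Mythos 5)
Your proposal is correct and follows essentially the same route as the paper: in the first case you lift the summand-wise pseudo Reidemeister I reductions to the sum via Remark~\ref{rem}, invoke Theorem~\ref{xyinequalities} to preserve the normalized outcome, and conclude with Lemma~\ref{odd-even-sum-normalized}; in the second case you reduce one summand to a shadow from Lemma~\ref{ohnoproof} via Lemma~\ref{minimal}, collapse the others to the unknot via Lemma~\ref{rational-to-unknot}, and finish with Corollary~\ref{21preserve}. The only (harmless) addition is your explicit remark that trivial summands can be discarded, which the paper leaves implicit.
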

\begin{proof}
In the first case, there is an odd-even shadow $O_n$ for every $n$, such that $T_n \stackrel{1}{\Rightarrow} O_n$.  Then by repeated applications
of Remark~\ref{rem},
\[ S = T_1 \# T_2 \# \cdots \# T_n \stackrel{1}{\Rightarrow} O_1 \# O_2 \# \cdots \# O_n.\]
Then by Theorem~\ref{xyinequalities}, $S$ has the same normalized outcome as $O_1 \# O_2 \# \cdots \# O_n$.
But this normalized outcome is $(U,U)$ by Lemma~\ref{odd-even-sum-normalized}.  So the normalized (and extended) outcome
of $S$ is $(U,U)$.  Therefore $S$ has outcome $U$ - it is a win for the Unknotter.

In the second case, by Lemma~\ref{minimal} there is some $n$ such that $T_n \stackrel{*}{\Rightarrow} A$ where $A$ is one of the knots
of Lemma~\ref{ohnoproof}.  But then by Lemma~\ref{rational-to-unknot} and repeated applications of Remark~\ref{rem},
\[ S = T_1\#T_2\# \cdots \# T_n \stackrel{*}{\Rightarrow} [~]\# [~] \# \cdots \# [~] \# A \# [~] \# \cdots \# [~] = A\]
where $[~]$ is the unknot as usual.

So since $A$ has normalized outcome $(2,1)$, by Lemma~\ref{ohnoproof}, it follows by Corollary~\ref{21preserve} that $S$ also
has normalized outcome $(2,1)$.  Thus the extended outcome of $S$ is $(2,1)$ if $S$ is even, and $(1,2)$ otherwise.  Consequently
the outcome of $S$ is exactly what was stated above.
\end{proof}
\section{Conclusion}
In summary, we have determined which player wins the knotting-unknotting game, when the initial position is a sum of rational shadows.
Rational shadows fall into two classes.  One class consists of the trivial shadow $[~]$ and all rational shadows which reduce
to odd-even shadows by repeated applications of (\ref{onecombl}-\ref{zlossr}) and (\ref{unwindl}-\ref{unwindr}), while the other class consists
of all other rational shadows.  Then a sum of one or more rational shadows has normalized outcome $(2,1)$ if any of the summands is in the second class,
and normalized outcome $(U,U)$ otherwise.

While we have a complete rule for the case of rational shadows, little is known about the case of rational \emph{pseudodiagrams}.  This case would be amenable
to computer explorations because we have a precise test for the unknot in this case.  Corollary~\ref{r1} and Lemmas~\ref{r1bland}-\ref{r2} still apply in this
case and could be used to simplify calculations.

Another idea worth pursuing is the study of the combinatorial game theory of connected sums.  The basic idea is to find a function $f$ from pseudodiagrams
to some monoid $M$, with the properties that $f(S\#T) = f(S) + f(T)$ and the outcome of a pseudodiagram $S$ is determined by $f(S)$.
The function $f$ condenses all the relevant strategic information about a position.
For example, in the case of sums of rational shadows, we could let $f(S) = (\pi(S),\kappa(S))$, where $\pi(S)$ is the parity of $S$
and $\kappa(S)$ is 0 or 1 depending on whether the normalized outcome of $S$ is $(U,U)$ or $(2,1)$.
Then the outcome of $S$ is determined by $f(S)$, and $f(S\#T)$ is predictable
from $f(S)$ and $f(T)$.  We would like a general version of $f$ for all pseudodiagrams.

The canonical choice for $f$ is the map from pseudodiagrams to equivalence classes of pseudodiagrams modulo \emph{strategic equivalence}, defined as follows:
\begin{definition}
Two pseudodiagrams $S$ and $T$ are \emph{strategically equivalent} if for all pseudodiagrams $Q$, the sums $S\#Q$ and $T\#Q$ have the same outcome.
\end{definition}
The quotient space of pseudodiagrams modulo strategic equivalence has a monoid structure induced by $\#$.
By a complicated case-by-case analysis of a much larger class of games, one can show that there are at most 37 equivalence classes of pseudodiagrams.
However, the actual number seems to be much smaller, and further work is needed to determine the full account of this structure.

\section{Acknowledgments}
I would like to thank Allison Henrich who edited this paper and first introduced me to the knotting-unknotting game.  This research was
done during 2010 and 2011 in the University of Washington's Mathematics REU in inverse problems, which is run by James Morrow.
\appendix
\section{Python code}\label{sec:py}
I used the following straightforward brute-force python code:
\begin{verbatim}

def unknot(fraction):
    # We assume the first and last numbers are not irrelevant
    num = 0
    denom = 1
    for k in fraction:
        num += denom*k
        num, denom = denom, num
    # The last number modified is now denom
    # Since we're assuming that the last modification
    # was important...
    if(denom == (denom/2)*2):
        print "Actually a two-component link!", fraction
        return False
    else:
        return abs(denom) == 1



def opponent(whose):
    if(whose == "Ursula"):
        return "Lear"
    return "Ursula"

        
def recursiveEval(template, state, whose):
    if(state[0] == 0): #none remain
        unk = unknot(state[1])
        if(unk):
            return "Ursula"
        else:
            return "Lear"
    else:
        for i in range(len(template)):
            if(state[2][i] > 0):
                # do the move
                state[2][i] -= 1
                state[0] -= 1
                state[1][i] += 1
                winner = recursiveEval(template, state, opponent(whose))
                state[1][i] -= 1
                state[0] += 1
                state[2][i] += 1
                if(winner == whose):
                    return whose
                # do the other move
                state[2][i] -= 1
                state[0] -= 1
                state[1][i] += -1
                winner = recursiveEval(template, state, opponent(whose))
                state[1][i] -= -1
                state[0] += 1
                state[2][i] += 1
                if(winner == whose):
                    return whose
        return opponent(whose)


def evaluateKnot(template):
    state = [sum(template), [0]*len(template), template[:]]
    ursFirst = recursiveEval(template, state, "Ursula")
    print "If Ursula goes first, the winner is",ursFirst
    learFirst = recursiveEval(template, state, "Lear")
    print "If Lear goes first, the winner is",learFirst
\end{verbatim}
Then for example \texttt{evaluateKnot([2,2])} produces the output
\begin{verbatim}
If Ursula goes first, the winner is Lear
If Lear goes first, the winner is Ursula
\end{verbatim}

By equations (\ref{zlossl}) and (\ref{unwindl}), if $[(a_1),(a_2),\ldots,(a_n)]$ is any shadow then
\[ [(0),(1),(a_1),(a_2),\ldots,(a_n)] \stackrel{1}{\to} [(0),(0),(a_1),(a_2),\ldots,(a_n)] = [(a_1),(a_2),\ldots,(a_n)].\]
So we can find the extended outcome of $[(a_1),(a_2),\ldots,(a_n)]$ by checking the outcome of
$[(a_1),\ldots,(a_n)]$ and $[(0),(1),(a_1),\ldots,(a_n)]$.  Then to verify Lemma~\ref{ohnoproof},
we make the following calls, which produce the expected results:
\begin{verbatim}
>>> evaluateKnot([3,1,3])
If Ursula goes first, the winner is Ursula
If Lear goes first, the winner is Lear
>>> evaluateKnot([0,1,3,1,3])
If Ursula goes first, the winner is Lear
If Lear goes first, the winner is Ursula
>>> evaluateKnot([2,1,2,2])
If Ursula goes first, the winner is Ursula
If Lear goes first, the winner is Lear
>>> evaluateKnot([0,1,2,1,2,2])
If Ursula goes first, the winner is Lear
If Lear goes first, the winner is Ursula
>>> evaluateKnot([2,2,1,2])
If Ursula goes first, the winner is Ursula
If Lear goes first, the winner is Lear
>>> evaluateKnot([0,1,2,2,1,2])
If Ursula goes first, the winner is Lear
If Lear goes first, the winner is Ursula
>>> evaluateKnot([2,1,1,2])
If Ursula goes first, the winner is Lear
If Lear goes first, the winner is Ursula
>>> evaluateKnot([0,1,2,1,1,2])
If Ursula goes first, the winner is Ursula
If Lear goes first, the winner is Lear
>>> evaluateKnot([2,2,1,2,2])
If Ursula goes first, the winner is Ursula
If Lear goes first, the winner is Lear
>>> evaluateKnot([0,1,2,2,1,2,2])
If Ursula goes first, the winner is Lear
If Lear goes first, the winner is Ursula
>>> evaluateKnot([2,2])
If Ursula goes first, the winner is Lear
If Lear goes first, the winner is Ursula
>>> evaluateKnot([0,1,2,2])
If Ursula goes first, the winner is Ursula
If Lear goes first, the winner is Lear
\end{verbatim}
All of these evaluated in less than a second or two, except the call
\begin{quote}
\texttt{evaluateKnot([0,1,2,2,1,2,2])}\end{quote} which took under 10 seconds.

\bibliographystyle{plain}
\bibliography{Master}{}

\end{document}